\documentclass[a4paper,11pt, reqno]{amsart}
\usepackage{amssymb,amsthm,amsmath}

\usepackage{enumerate}
\usepackage{ifthen}
\usepackage{graphicx} 
\baselineskip=0pt
\baselineskip=0pt

\setlength{\topmargin}{0mm}
\setlength{\headheight}{0mm}
\setlength{\headsep}{10mm}
\setlength{\textheight}{238mm}
\setlength{\footskip}{15mm}
\setlength{\marginparpush}{20pt}

\setlength{\oddsidemargin}{5mm}\setlength{\evensidemargin}{0mm}
\setlength{\textwidth}{160mm}
\setlength{\marginparsep}{0mm}
\setlength{\marginparwidth}{20mm}
\setlength{\parsep}{20em}

\usepackage[colorlinks,citecolor=blue,hypertexnames=false]{hyperref}

\setlength{\textwidth}{15.1cm} \setlength{\oddsidemargin}{1cm}
\setlength{\evensidemargin}{1cm} \setlength{\footskip}{38pt}
\pagestyle{plain} \numberwithin{equation}{section}
\newtheorem{thm}{Theorem}[section]
\newtheorem{cor}[thm]{Corollary}
\newtheorem{lem}[thm]{Lemma}

\newtheorem{defn}[thm]{Definition}

\allowdisplaybreaks[2]

\theoremstyle{definition}
\newtheorem{rmk}[thm]{Remark}

{\qed\bigskip}

\newcounter{alphabet}


\ifx\undefined\bysame
\newcommand{\bysame}{\leavevmode\hbox to3em{\hrulefill}\,}
\fi


\markboth{} {}

\title[Fractional wave equation on compact Lie groups]
{Nonlinear fractional wave equation on compact Lie groups}
\author{Aparajita Dasgupta} 
\address{Aparajita Dasgupta, Assistant Professor \endgraf Department of Mathematics
	\endgraf Indian Institute of Technology  Delhi
	\endgraf Delhi, 110016  India.} 
\email{adasgupta@maths.iitd.ac.in}
\author{Vishvesh Kumar} 

\address{Vishvesh Kumar, Ph. D.  \endgraf Department of Mathematics: Analysis, Logic and Discrete Mathematics
	\endgraf Ghent University
	\endgraf Krijgslaan 281, Building S8,	B 9000 Ghent,
	Belgium.} 
\email{vishveshmishra@gmail.com}

\author{Shyam Swarup Mondal} 

\address{Shyam Swarup Mondal  \endgraf Department of Mathematics
	\endgraf Indian Institute of Technology Delhi
	\endgraf Delhi, 110 016, India.} 
\email{mondalshyam055@gmail.com}

\keywords{Nonlinear fractional wave equation,  well-posedness, fractional Klein-Gordon equation,  compact Lie groups,  $L^2-L^2$-estimates} \subjclass[2010]{Primary 35L15,  35L05; Secondary  35L05}
\thanks{ The first and third authors were supported by Core Research Grant(RP03890G), Science and Engineering Research Board (SERB), DST, India.  The second author was supported  by the FWO Odysseus 1 grant G.0H94.18N: Analysis and Partial Differential Equations, the Methusalem programme of the Ghent University Special Research Fund (BOF) (Grant number 01M01021), and by FWO Senior Research Grant G011522N.}
\date{\today}
\begin{document}
	\allowdisplaybreaks

	\begin{abstract} 
		
		Let $G$ be a compact Lie group. In this article, we consider the initial value fractional wave equation with power-type nonlinearity on    $G$. Mainly, we investigate some  $L^{2}-L^{2}$ estimates of the solutions to the homogeneous fractional wave equation on $G$  with the help of the group Fourier transform on $G$.  	Further,   using the Fourier analysis on compact Lie groups, we prove a local in-time existence result in the energy space. Moreover,  under certain conditions on the initial data, a finite time blow-up result is established. We also derive a    sharp lifespan for local (in-time) solutions. 	Finally, we consider the space-fractional wave equation with a regular mass term depending on the position and study the well-posedness of the fractional Klein-Gordon equation on compact Lie groups.	 	\end{abstract}

	\maketitle
	\section{Introduction}
	In this paper, we investigate a  finite time blow-up result for solutions to fractional wave equations involving the Laplace-Beltrami operator on the compact Lie group under a suitable sign assumption for the initial data.
	
	For $0<\alpha<1$,  we consider the Cauchy problem for fractional wave equation  with power type nonlinearity, namely,
	\begin{align} \label{eq0010}
		\begin{cases}
			\partial^2_tu+(-\mathcal{L})^\alpha u =f(u), & x\in G,t>0,\\
			u(0,x)=\varepsilon u_0(x),  & x\in G,\\ \partial_tu(x,0)=\varepsilon u_1(x), & x\in G,
		\end{cases}
	\end{align}
	where 	  $G$ is a compact Lie group,   $\mathcal{L}$ be the Laplace-Beltrami operator on $G$ (which also coincides with the Casimir element of the enveloping algebra), and $\varepsilon$ is a positive constant describing the smallness of the Cauchy data. Here  for the moment, we assume that  $u_{0}$ and $ u_{1}$ are  taken from the energy space $ H_{\mathcal{L}}^\alpha(G)$ and $ L^2(G)$,  respectively, and  concerning the nonlinearity $f(u)$, we  deal only with  the typical case such as $f(u):=|u|^{p}, p>1$ without losing the  essence of the problem.

	The study of partial differential equations is undoubtedly one of the fundamental tools for understanding and modeling natural and real-world phenomena.  Fractional differential operators are nonlocal operators that are considered as a generalization of classical differential operators of arbitrary non-integer orders.
	For the last few decades,   partial differential equations involving nonlocal operators have gained a considerable amount of interest and have become one of the essential topics in mathematics and its applications.  Many physical phenomena in engineering,    quantum field theory, astrophysics,  biology, materials, control theory, and other sciences can be successfully described by models utilizing mathematical tools from fractional calculus \cite{Neww1, NN1,NN3,NN4, f4}.    In particular, the fractional Laplacian is represented as the infinitesimal generator of stable radially symmetric Lévy processes  \cite{EE1}.  
	For other exciting models related to fractional differential equations, we refer to the reader  \cite{f3, f10,f11,EE24} to mention only a few of many recent publications.


	In recent years, due to the nonlocal nature of the fractional derivatives, considerable attention has been devoted to various models involving fractional Laplacian and nonlocal operators by several researchers.  	There is a vast literature available involving the fractional Laplacian on the Euclidean framework, which is difficult to mention; we refer to important papers     \cite{EE3, EE4, EE5, EE6, EE10, EE24, EE27, EE31} and the references therein.  Here we would like to point out that the fractional Laplacian operator $(-\Delta)^{\alpha}$ can be reduced to the classical Laplace operator $-\Delta$ as $\alpha \rightarrow 1$. We refer to \cite{EE24} for more details. In particular,  many interesting results in some classical elliptic problems have been extended in the fractional Laplacian setting, see 	\cite{EE9}. We also refer \cite{mi}  and \cite{mii} for the Fractional Klein-Gordon equation with singular mass on the Euclidean and the graded Lie group, respectively.

	The study of the semilinear wave equation has also been extended in the non-Euclidean framework. Several papers are devoted for studying linear PDE in non-Euclidean structures in the last decades.   For example, the semilinear wave equation with or without damping has been investigated for the   Heisenberg group \cite{24,30}.    In the case of graded groups, we refer to the recent works \cite{gra1, gra2, gra3}.  	Concerning the damped wave equation on compact Lie groups, we refer to \cite{27, 28,31,garetto}. Here, we would also like to highlight that the discrete time-dependent wave equation and it's semiclassical analysis were considered by   Dasgupta,  Ruzhansky, and Tushir \cite{apara}. They also studied   the Klein-Gordon equation with discrete  fractional Laplacian on $h\mathbb{Z}^n$  in \cite{aparajita}.

	
	In particular,  the author in \cite{27} studied semilinear wave equation with power nonlinearity $|u|^p$ on compact Lie groups and proved a local in-time existence result in the energy space via Fourier analysis on compact Lie groups. He also derived a blow-up result for the semilinear Cauchy problem for any $p > 1$. Then, an interesting and viable problem is to study  the fractional wave equation  (\ref{eq0010})  of order $\alpha$ with $ 0 < \alpha <  1$, with power-type nonlinearity on  $G$.   So far, to the best of   our knowledge, the fractional wave equation has not been consider yet in the frame of compact Lie groups. The main aim of this article is to investigate the fractional wave equation with power-type nonlinearity on the compact Lie group $G$. More preciously, using the Gagliardo-Nirenberg type inequality (in order to handle power nonlinearity in $L^2(G))$, we prove the local well-posedness of the   Cauchy problem (\ref{eq0010}) in the energy evolution space  $\mathcal{C}\left([0, T], H_{\mathcal{L}}^{\alpha}( {G})\right) \cap \mathcal{C}^{1}\left([0, T], L^{2}( {G})\right)$. Further,   we establish a finite time blow-up result to  (\ref{eq0010})  for any  $p > 1$   provided the initial data satisfies certain sign assumptions. 	 
	Finally,   we consider a space-fractional wave equation with a regular mass term depending on the position and study the well-posedness of the space-fractional Klein-Gordon equation on the compact Lie group $G$. 	
	%
	%
	%
	%
	%
	%
	%
	%
	%
	%


	\subsection{Main results}
	Throughout the paper  we denote $L^{q}(G), 1 \leq  q<\infty$, the space of $q$-integrable functions on the compact Lie group $G$ with respect to the normalized Haar measure on $G$   and  essentially bounded for $q=\infty$.   For $\alpha>0$ and $q \in(1, \infty)$, the fractional Sobolev space $H_{\mathcal{L} }^{ \alpha, q}(G)$ of order $\alpha$ is defined as  
	\begin{align}\label{sob}
		H_{\mathcal{L}}^{\alpha, q}(G) \doteq\left\{f \in L^{q}(G):(-\mathcal{L})^{\alpha / 2} f \in L^{q}(G)\right\}
	\end{align}
	endowed with the norm $\|f\|_{H_{\mathcal{L}}^{\alpha, q}(G)} \doteq\|f\|_{L^{q}(G)}+\left\|(-\mathcal{L})^{\alpha / 2} f\right\|_{L^{q}(G)}$.  We simply denote   the Hilbert space $H_{\mathcal{L}}^{\alpha, 2}(G)$ by $H_{\mathcal{L}}^{\alpha}(G)$.   
	
	By employing Fourier analysis for compact Lie groups, our first result concerning   $L^2$-decay estimates for the solution of the homogeneous Cauchy problem (\ref{eq0010}) (when $f=0$)   is   stated in the following theorem.
	\begin{thm}\label{thm11}
		Let $0<\alpha <1$.  Suppose that $(u_0, u_1)\in H_{\mathcal{L}}^\alpha(G) \times  L^2(G)$ and $u\in\mathcal{C}([0,\infty),H_{\mathcal{L}}^\alpha(G))\cap \mathcal{C}^1([0,\infty),L^2(G))$ be the solution to the homogeneous Cauchy problem
		\begin{align}\label{eq1}
			\begin{cases}
				\partial^2_tu+(-\mathcal{L})^\alpha u =0, & x\in G,~t>0,\\
				u(0,x)=u_0(x),  & x\in G,\\ \partial_tu(x,0)=u_1(x), & x\in G.
			\end{cases}
		\end{align}
		Then, $u$ satisfies the following $L^2( G)-L^2( G)$ estimates
		\begin{align}\label{111111}
			\| u(t,\cdot)\|_{L^2( G)} &\leq C[ \left\|u_{0}\right\|_{L^{2}(G)}+t\left\|u_{1}\right\|_{L^{2}(G)}],\\\nonumber
			\left\|(-\mathcal{L})^{\alpha / 2} u(t, \cdot)\right\|_{L^{2}(G)}&\leq C[ \left\|u_{0}\right\|_{H_{\mathcal{L}}^{{\alpha }}(G)}+\left\|u_{1}\right\|_{L^{2}(G)}],\\\nonumber
			\|\partial_tu(t,\cdot)\|_{L^2( G)}&\leq C[ \left\|u_{0}\right\|_{H_{\mathcal{L}}^{{\alpha }}(G)}+\left\|u_{1}\right\|_{L^{2}(G)}].
		\end{align}
		for any $t\geq 0$, where  $C$ is a positive multiplicative constant.
	\end{thm}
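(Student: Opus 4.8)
The plan is to diagonalise the problem by passing to the Fourier side via the group Fourier transform on $G$. By the Peter--Weyl theorem the Laplace--Beltrami operator $-\mathcal{L}$, being the Casimir, acts on the matrix coefficients of each $\xi\in\widehat{G}$ as multiplication by a nonnegative scalar $\lambda_\xi^2$, so that $(-\mathcal{L})^{\alpha}$ and $(-\mathcal{L})^{\alpha/2}$ become multiplication by $\lambda_\xi^{2\alpha}$ and $\lambda_\xi^{\alpha}$ respectively on the matrix-valued coefficient $\widehat{u}(t,\xi)$. Taking the Fourier transform of \eqref{eq1} in the space variable therefore turns the PDE into the decoupled family of second-order ordinary differential equations
\begin{align*}
\partial_t^2\widehat{u}(t,\xi)+\lambda_\xi^{2\alpha}\,\widehat{u}(t,\xi)=0,\qquad \widehat{u}(0,\xi)=\widehat{u_0}(\xi),\quad \partial_t\widehat{u}(0,\xi)=\widehat{u_1}(\xi),
\end{align*}
one for each entry of $\widehat{u}(t,\xi)$. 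Since the prescribed regularity $u\in\mathcal{C}([0,\infty),H_{\mathcal{L}}^\alpha(G))\cap\mathcal{C}^1([0,\infty),L^2(G))$ guarantees that $\widehat{u}(t,\xi)$ solves this ODE with the given data, uniqueness for the ODE identifies it with the explicit harmonic solution.

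Next I would solve these ODEs explicitly. For $\lambda_\xi\neq 0$ the solution is
\begin{align*}
\widehat{u}(t,\xi)=\cos(\lambda_\xi^{\alpha}t)\,\widehat{u_0}(\xi)+\frac{\sin(\lambda_\xi^{\alpha}t)}{\lambda_\xi^{\alpha}}\,\widehat{u_1}(\xi),
\end{align*}
while the trivial representation ($\lambda_\xi=0$) contributes $\widehat{u}(t,\xi)=\widehat{u_0}(\xi)+t\,\widehat{u_1}(\xi)$. From here the three estimates reduce to bounding the Fourier multipliers pointwise. Using $|\cos s|\le 1$ and $|\sin s|\le |s|$ (so that $|\sin(\lambda_\xi^\alpha t)|/\lambda_\xi^\alpha\le t$) one obtains $\|\widehat{u}(t,\xi)\|_{HS}\le \|\widehat{u_0}(\xi)\|_{HS}+t\,\|\widehat{u_1}(\xi)\|_{HS}$ uniformly in $\xi$, the trivial representation obeying the same bound. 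For the other two estimates I would multiply by $\lambda_\xi^\alpha$ and differentiate in $t$ respectively, and then use $|\sin s|\le 1$, $|\cos s|\le 1$ to get that both $\|\lambda_\xi^\alpha\widehat{u}(t,\xi)\|_{HS}$ and $\|\partial_t\widehat{u}(t,\xi)\|_{HS}$ are bounded by $\lambda_\xi^\alpha\,\|\widehat{u_0}(\xi)\|_{HS}+\|\widehat{u_1}(\xi)\|_{HS}$.

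Finally I would sum over the dual using the Plancherel identity $\|f\|_{L^2(G)}^2=\sum_{\xi\in\widehat{G}}d_\xi\,\|\widehat{f}(\xi)\|_{HS}^2$ together with the triangle inequality in the weighted $\ell^2$ space. Recognising $\lambda_\xi^\alpha\,\|\widehat{u_0}(\xi)\|_{HS}=\|\widehat{(-\mathcal{L})^{\alpha/2}u_0}(\xi)\|_{HS}$ (the symbol is scalar, so it factors out of the Hilbert--Schmidt norm) and using $\|(-\mathcal{L})^{\alpha/2}u_0\|_{L^2(G)}\le \|u_0\|_{H_{\mathcal{L}}^{\alpha}(G)}$ converts the pointwise bounds into the three claimed $L^2(G)-L^2(G)$ estimates.

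The argument is essentially elementary once the symbol of $(-\mathcal{L})^\alpha$ is in hand; there is no genuine obstacle, since the harmonic-type ODEs are undamped and their multipliers are globally bounded, which is exactly why only linear-in-$t$ growth (and no decay) appears. The only point requiring care is the separate treatment of the trivial representation, where the multiplier $\sin(\lambda_\xi^{\alpha}t)/\lambda_\xi^{\alpha}$ degenerates to $t$ and accounts for the factor $t\|u_1\|_{L^2(G)}$ in the first estimate.
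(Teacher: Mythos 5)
Your proposal is correct and follows essentially the same route as the paper: take the group Fourier transform to decouple the equation into scalar harmonic oscillator ODEs indexed by $[\xi]\in\widehat{G}$, solve them explicitly with the multipliers $\cos(t\lambda_\xi^\alpha)$ and $\sin(t\lambda_\xi^\alpha)/\lambda_\xi^\alpha$ (with the degenerate case $1$ and $t$ for the trivial representation), apply the elementary pointwise bounds, and sum via the Plancherel identity. No gaps.
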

	%
	%
	

	Next we prove the local well-posedness of the   Cauchy problem   (\ref{eq0010})  in the energy evolution space  $\mathcal C\left([0,T],  H^\alpha_{\mathcal{L}}(G)\right)\cap\mathcal C^1\left([0,T],L^2(G)\right)$.  In this case, a Gagliardo-Nirenberg type inequality (proved in \cite{Gall}) will be used in order to estimate the power nonlinearity in $L^2(G)$.	The following result  is about the   local existence   for the solution of  the Cauchy problem (\ref{eq0010}). 
	\begin{thm}\label{thm22}
		Let $0<\alpha <1$ and let  $G$ be a compact connected Lie group with the topological dimension $n.$ Assume that $n\geq 2[\alpha]+2$. Suppose that $(u_0, u_1)\in H^\alpha_{\mathcal L}(G) \times  L^2(G)$      and $p>1$ such that $p\leq\frac{n}{n-2\alpha}.$ Then there exists $T=T(\varepsilon)>0$ such that the Cauchy problem (\ref{eq0010})   admits a uniquely determined mild solution $$u\in \mathcal{C}([0,T],H^\alpha_{\mathcal L}(G))\cap \mathcal{C}^1([0,T],L^2(G)).$$
		Moreover, the lifespan $T$ satisfies    the following   lower bound estimates
		\begin{align}\label{eq1111}
			T(\varepsilon) \geq 
			C \varepsilon^{-\frac{p-1}{b(p)}} ,
		\end{align}  
		where     the constant $C>0$ is independent of $\varepsilon$ and 
		\begin{align}\label{bp}
			b(p) =\left\{\begin{array}{ll}
				p+1 & \text { if } u_{1} \neq 0 ,\\
				2 & \text { if } u_{1}=0.
			\end{array}\right.
		\end{align}
	\end{thm}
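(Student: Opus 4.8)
The plan is to recast (\ref{eq0010}) through Duhamel's principle as a fixed-point equation for the integral operator built from the propagators of the free fractional wave equation, and then to run a contraction argument in a time-weighted version of the energy space. Diagonalising $(-\mathcal{L})^{\alpha}$ by the group Fourier transform, the solution of the homogeneous problem (\ref{eq1}) is
\[
u_{\mathrm{lin}}(t)=\cos\!\bigl(t(-\mathcal{L})^{\alpha/2}\bigr)u_{0}
+\frac{\sin\!\bigl(t(-\mathcal{L})^{\alpha/2}\bigr)}{(-\mathcal{L})^{\alpha/2}}\,u_{1},
\]
where the second multiplier is read off by continuity and equals $t$ on the kernel of $\mathcal{L}$ (the constants); this is exactly the origin of the factor $t$ in the first line of (\ref{111111}). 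Hence the mild solution of (\ref{eq0010}) is a fixed point of
\[
N(u)(t)=\varepsilon\,u_{\mathrm{lin}}(t)
+\int_{0}^{t}\frac{\sin\!\bigl((t-s)(-\mathcal{L})^{\alpha/2}\bigr)}{(-\mathcal{L})^{\alpha/2}}\,|u(s)|^{p}\,ds .
\]

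I would work in $X(T)=\mathcal{C}([0,T],H_{\mathcal{L}}^{\alpha}(G))\cap\mathcal{C}^{1}([0,T],L^{2}(G))$ with the weighted norm
\[
\|u\|_{X(T)}=\sup_{t\in[0,T]}\Bigl(w(t)^{-1}\|u(t)\|_{L^{2}(G)}
+\|(-\mathcal{L})^{\alpha/2}u(t)\|_{L^{2}(G)}
+\|\partial_{t}u(t)\|_{L^{2}(G)}\Bigr),
\]
where $w(t)=1+t$ if $u_{1}\neq 0$ and $w(t)=1$ if $u_{1}=0$, the weight being tailored to the growth permitted by (\ref{111111}). Applying the three estimates of Theorem \ref{thm11} to each propagator appearing in $N$ bounds the linear part by $C\varepsilon\bigl(\|u_{0}\|_{H_{\mathcal{L}}^{\alpha}(G)}+\|u_{1}\|_{L^{2}(G)}\bigr)$ uniformly in $T$ (division by $w$ absorbs the factor $t$), while the Duhamel term is controlled by placing those same estimates under the integral sign against $\|\,|u(s)|^{p}\|_{L^{2}(G)}$.

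The decisive nonlinear step is the bound on $\|\,|u(s)|^{p}\|_{L^{2}(G)}=\|u(s)\|_{L^{2p}(G)}^{p}$, for which I would invoke the Gagliardo--Nirenberg inequality on $G$ from \cite{Gall},
\[
\|v\|_{L^{2p}(G)}\leq C\,\|v\|_{H_{\mathcal{L}}^{\alpha}(G)}^{\theta}\,\|v\|_{L^{2}(G)}^{1-\theta},
\qquad
\theta=\frac{n}{\alpha}\Bigl(\frac12-\frac{1}{2p}\Bigr)=\frac{n(p-1)}{2\alpha p}.
\]
Admissibility $0\leq\theta\leq 1$ is precisely the hypothesis $p\leq\frac{n}{n-2\alpha}$, while $n\geq 2[\alpha]+2$ (that is, $n\geq 2$ here, as $0<\alpha<1$) guarantees $n-2\alpha>0$ so that this range of exponents is nonempty; this is where both restrictions enter. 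The elementary inequality $\bigl|\,|u|^{p}-|v|^{p}\bigr|\leq C\bigl(|u|^{p-1}+|v|^{p-1}\bigr)|u-v|$, combined with H\"older's inequality (exponents $\tfrac{2p}{p-1}$ and $2p$) and the same Gagliardo--Nirenberg estimate, yields the matching Lipschitz bound for $\|\,|u|^{p}-|v|^{p}\|_{L^{2}(G)}$. Feeding these into the Duhamel term and taking $R\simeq\varepsilon$, one obtains self-mapping and contraction inequalities of the schematic form
\[
\|N(u)\|_{X(T)}\leq C\varepsilon+C\,\Phi(T)\,R^{p},
\qquad
\|N(u)-N(v)\|_{X(T)}\leq C\,\Phi(T)\,R^{p-1}\,\|u-v\|_{X(T)},
\]
so that once $T$ is small enough that $C\Phi(T)R^{p-1}\leq\tfrac12$, the map $N$ contracts the ball $B_{R}=\{u\in X(T):\|u\|_{X(T)}\leq R\}$ into itself; the Banach fixed-point theorem then gives a unique mild solution, and strong continuity of the propagators places it in $\mathcal{C}([0,T],H_{\mathcal{L}}^{\alpha}(G))\cap\mathcal{C}^{1}([0,T],L^{2}(G))$.

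The lifespan bound (\ref{eq1111}) together with the dichotomy (\ref{bp}) is read off from the time factor $\Phi(T)$. The crucial point is that the Gagliardo--Nirenberg inequality on a compact group necessarily retains the full $L^{2}$-norm of $v$: since the constants lie in the kernel of $(-\mathcal{L})^{\alpha/2}$, the seminorm $\|(-\mathcal{L})^{\alpha/2}v\|_{L^{2}(G)}$ alone cannot dominate $\|v\|_{L^{2p}(G)}$, so the growth of $\|u(s)\|_{L^{2}(G)}$ propagates directly into the nonlinearity. When $u_{1}\neq 0$ this norm grows like $\varepsilon(1+s)$, whence $\|\,|u(s)|^{p}\|_{L^{2}(G)}\lesssim\varepsilon^{p}(1+s)^{p}$; integrating against the $(t-s)$ weight from the first line of (\ref{111111}) and dividing by $w(t)=1+t$ gives $\Phi(T)\simeq T^{p+1}$, so the largest admissible $T$ obeys $T\simeq\varepsilon^{-(p-1)/(p+1)}$, i.e.\ $b(p)=p+1$. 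When $u_{1}=0$ the $L^{2}$-norm stays $\simeq\varepsilon$, so $\|\,|u(s)|^{p}\|_{L^{2}(G)}\lesssim\varepsilon^{p}$ and the same integration yields $\Phi(T)\simeq T^{2}$, giving $T\simeq\varepsilon^{-(p-1)/2}$, i.e.\ $b(p)=2$. I expect the main difficulty to be exactly this bookkeeping of time weights: one must interlock the inhomogeneous-in-$t$ linear estimates of (\ref{111111}) --- where $\|u(t)\|_{L^{2}(G)}$ may grow while the $H_{\mathcal{L}}^{\alpha}$- and $\partial_{t}$-norms do not --- with the non-homogeneous Gagliardo--Nirenberg inequality, so that the powers of $T$ assemble into precisely $T^{b(p)}$ rather than a cruder exponent; making this sharp is what lets (\ref{eq1111}) match the blow-up rate.
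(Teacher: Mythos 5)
Your proposal follows essentially the same route as the paper: Duhamel reformulation, a contraction in the time-weighted space $X(T)$ with weight $a(t)=1+t$ when $u_{1}\neq 0$, the Gagliardo--Nirenberg inequality from \cite{Gall} with $\theta=\tfrac{n}{\alpha}\bigl(\tfrac12-\tfrac{1}{2p}\bigr)$ to control $\||u|^{p}\|_{L^{2}(G)}$ (which is exactly where $p\leq\tfrac{n}{n-2\alpha}$ and $n\geq 2[\alpha]+2$ enter), the standard Lipschitz bound for the difference of nonlinearities, and the same bookkeeping of time powers producing $T^{b(p)}$ and hence the lower lifespan bound $T(\varepsilon)\geq C\varepsilon^{-\frac{p-1}{b(p)}}$. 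The only cosmetic difference is that you write the propagators as spectral multipliers while the paper phrases them as convolutions with fundamental solutions estimated via Minkowski and Young's inequalities; the substance is identical.
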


	
	Our next result is about the non-existence of global in-time solutions to (\ref{eq0010}) for any $p > 1$  regardless of the size of initial data. We first introduce a suitable notion of energy solutions for the   Cauchy problem (\ref{eq0010}) before stating the blow-up result.
	\begin{defn}\label{eq332}
		Let $0<\alpha <1$ and $\left(u_{0}, u_{1}\right) \in H_{\mathcal{L}}^{\alpha}(G) \times L^{2}(G)$. For  any $T>0,$ we say that
		$$
		u \in \mathcal{C}\left([0, T), H_{\mathcal{L}}^{\alpha}(G)\right) \cap \mathcal{C}^{1}\left([0, T), L^{2}(G)\right) \cap L_{\text {loc }}^{p}([0, T) \times G)
		$$
		is an energy solution on $[0, T)$ to (\ref{eq0010}) if $u$ satisfies  the following  integral relation:
		\begin{align}\label{eq011}\nonumber
			&\int_{ {G}} \partial_{t} u(t, x) \phi(t, x)  {d} x-\int_{ {G}} u(t, x) (\partial_{s} \phi)(t, x) {d} x   +\varepsilon \int_{G} u_{0}(x)(\partial_{s}\phi)(0, x) \;d x\\\nonumber
			& -\varepsilon \int_{G} u_{1}(x) \phi(0, x) \;d x +\int_{0}^{t} \int_{G} u(s, x)\left(\partial^2_s\phi(s, x) +(-\mathcal{L})^\alpha \phi(s, x) \right) \;d x {~d} s \\&
			=\int_{0}^{t} \int_{G}|u(s, x)|^{p} \phi(s, x) \;d x  {~d} s
		\end{align}
		for any $\phi \in \mathcal{C}_{0}^{\infty}([0, T) \times G)$ and   any $t \in(0, T)$.
	\end{defn}  
	\begin{thm}\label{f6}
		Let  $0<\alpha <1$, $p>1$,  and let $\left(u_{0}, u_{1}\right) \in H_{\mathcal{L}}^{\alpha}(G) \times L^{2}(G)$ be nonnegative and nontrivial functions. Suppose 
		$$u \in \mathcal{C}\left([0, T), H_{\mathcal{L}}^{\alpha}(G)\right) \cap \mathcal{C}^{1}\left([0, T), L^{2}(G)\right) \cap L_{\mathrm{loc}}^{p}([0, T) \times G)$$ be an energy solution to the Cauchy problem   (\ref{eq0010})    with lifespan $T=T(\varepsilon)$. Then there exists a   constant $\varepsilon_{0}=\varepsilon_{0}\left(u_{0}, u_{1}, p\right)>0$ such that for any $\varepsilon \in\left(0, \varepsilon_{0}\right],$ the energy solution $u$ blows up in finite time. Furthermore, the lifespan $T$ satisfies the following   estimates
		\begin{align}\label{eq1112}
			T(\varepsilon) \leq 
			C \varepsilon^{-\frac{p-1}{b(p)}}  ,
		\end{align}
		where the constant $C>0$ is independent of $\varepsilon$ and \begin{align}
			b(p) =\left\{\begin{array}{ll}
				p+1 & \text { if } u_{1} \neq 0 ,\\
				2 & \text { if } u_{1}=0.
			\end{array}\right.
		\end{align}
	\end{thm}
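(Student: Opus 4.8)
The plan is to reduce the nonlinear problem to an ordinary differential inequality for the spatial average of $u$ and then run a Kato-type blow-up argument. Set
$$F(t) := \int_G u(t,x)\,dx, \qquad F_0 := \int_G u_0(x)\,dx, \qquad F_1 := \int_G u_1(x)\,dx;$$
$F$ is well defined and continuously differentiable on $[0,T)$ since $u \in \mathcal{C}^1([0,T),L^2(G))$ and the normalized Haar measure is finite, so $L^2(G)\hookrightarrow L^1(G)$. The decisive structural fact is that the constant function on $G$ lies in the kernel of the Laplace--Beltrami operator, whence $(-\mathcal{L})^\alpha 1 = 0$. Accordingly I would test the weak formulation (\ref{eq011}) against $\phi(s,x)=\psi(s)$, where $\psi\in\mathcal{C}_0^\infty([0,T))$ is chosen to be identically $1$ on a neighbourhood of $[0,t]$: all spatial contributions and all terms carrying $\psi'$ or $\psi''$ then vanish on $[0,t]$.

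With this choice (\ref{eq011}) collapses to
$$F'(t) = \varepsilon F_1 + \int_0^t\!\!\int_G |u(s,x)|^p\,dx\,ds.$$
Because the Haar measure is a probability measure, Jensen's inequality gives $\int_G|u|^p\,dx \ge \big(\int_G|u|\,dx\big)^p \ge |F(s)|^p$, so differentiating the identity yields
$$F''(t) \ge |F(t)|^p, \qquad F(0)=\varepsilon F_0 > 0, \qquad F'(0)=\varepsilon F_1 \ge 0,$$
the strict positivity of $F(0)$ coming from $u_0$ being nonnegative and nontrivial. Since $F''\ge 0$, the function $F$ is convex and nondecreasing with $F(t)\ge F(0)>0$; hence $|F(t)|^p=F(t)^p$ and $F$ stays strictly positive on its interval of existence.

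It then remains to show that $F''\ge F^p$ with these data forces finite-time blow-up and to read off the two lifespan rates. I would multiply by $F'\ge 0$ and integrate to get the first-integral estimate
$$\big(F'(t)\big)^2 \ge \big(\varepsilon F_1\big)^2 - \tfrac{2}{p+1}\big(\varepsilon F_0\big)^{p+1} + \tfrac{2}{p+1}F(t)^{p+1}.$$
If $u_1=0$, then $\varepsilon F_1=0$ and, separating variables in $F'\ge \sqrt{\tfrac{2}{p+1}\big(F^{p+1}-(\varepsilon F_0)^{p+1}\big)}$, the rescaling $F=\varepsilon F_0\,w$ shows the maximal existence time is at most $\sqrt{\tfrac{p+1}{2}}\int_1^\infty(w^{p+1}-1)^{-1/2}\,dw\cdot(\varepsilon F_0)^{-(p-1)/2}$, giving $T(\varepsilon)\le C\varepsilon^{-(p-1)/2}$, i.e. $b(p)=2$. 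If $u_1\neq 0$, write $a=\varepsilon F_0$, $b=\varepsilon F_1>0$; since $p+1>2$ the term $\tfrac{2}{p+1}a^{p+1}$ is dominated by $\tfrac12 b^2$ as soon as $\varepsilon\le\varepsilon_0(u_0,u_1,p)$, so $(F')^2\ge \tfrac12 b^2+\tfrac{2}{p+1}F^{p+1}$. Separating variables and rescaling via $F=b^{2/(p+1)}w$ then bounds the lifespan by $\big(\int_0^\infty(\tfrac12+\tfrac{2}{p+1}w^{p+1})^{-1/2}\,dw\big)\,b^{-(p-1)/(p+1)}$, giving $T(\varepsilon)\le C\varepsilon^{-(p-1)/(p+1)}$, i.e. $b(p)=p+1$. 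In both cases the constant $C$ absorbs $F_0,F_1$ and is therefore independent of $\varepsilon$.

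The only step requiring genuine care is the reduction itself: verifying that $\phi(s,x)=\psi(s)$ is admissible in (\ref{eq011}), that $(-\mathcal{L})^\alpha$ annihilates it, and that the resulting relation may be differentiated in $t$ (where the regularity $u\in\mathcal{C}^1([0,T),L^2(G))$ together with local $L^p$-integrability is used; it is cleanest to keep $F'$ in its integral form, which is absolutely continuous, and to differentiate the energy functional $\tfrac12(F')^2-\tfrac{1}{p+1}F^{p+1}$ a.e.). The convergence of the improper integrals at the lower endpoint $F=\varepsilon F_0$ in the case $u_1=0$ (the integrand behaves like $(F-\varepsilon F_0)^{-1/2}$ there) and the explicit threshold $\varepsilon_0$ guaranteeing $\tfrac{2}{p+1}a^{p+1}\le\tfrac12 b^2$ are then routine.
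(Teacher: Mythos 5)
Your reduction to the ordinary differential inequality is exactly the paper's: you test the weak formulation \eqref{eq011} with a spatially constant cut-off that equals $1$ on $[0,t]\times G$, use that constants are annihilated by $(-\mathcal{L})^{\alpha}$, and obtain for $F(t)=\int_G u(t,x)\,dx$ the relation $F'(t)-\varepsilon\int_G u_1\,dx=\int_0^t\int_G|u|^p\,dx\,ds$, whence $F''\geq |F|^p$ by Jensen's inequality on the normalized Haar measure. Where you diverge is in how this inequality is converted into blow-up with the stated lifespan rates: the paper stops at this point and invokes Takamura's improved Kato lemma (Lemmas \ref{kato} and \ref{kato2}, cited from \cite{kato}) as a black box, choosing $a=1$, $A=\varepsilon\int_G u_1\,dx$ when $u_1\neq 0$ and $a=0$, $A=\varepsilon\int_G u_0\,dx$ when $u_1=0$, which delivers the exponents $\frac{p-1}{p+1}$ and $\frac{p-1}{2}$ respectively. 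You instead prove the ODE blow-up directly by the first-integral method: multiplying by $F'\geq 0$, integrating to get $(F')^2\geq(\varepsilon F_1)^2-\frac{2}{p+1}(\varepsilon F_0)^{p+1}+\frac{2}{p+1}F^{p+1}$, and separating variables, with the smallness threshold $\varepsilon_0$ entering only to absorb the term $\frac{2}{p+1}(\varepsilon F_0)^{p+1}$ into $\frac12(\varepsilon F_1)^2$ in the case $u_1\neq 0$. Your scalings $F=\varepsilon F_0 w$ and $F=b^{2/(p+1)}w$ reproduce the correct rates, and the convergence of the two improper integrals is as you say. The trade-off is that your argument is self-contained and makes the origin of the dichotomy in $b(p)$ transparent (it is the competition between the kinetic term $(\varepsilon F_1)^2$ and the potential term $F^{p+1}$ in the first integral), at the cost of having to handle the degenerate starting point $F'(0)=0$ in the $u_1=0$ case, which you correctly resolve by noting $F''\geq F(0)^p>0$ forces $F$ to be strictly increasing for $t>0$; the paper's route is shorter but outsources precisely this bookkeeping to the cited lemma. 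One small point of care: the differentiation giving $F''(t)=\int_G|u(t,x)|^p\,dx$ holds a priori only almost everywhere (the integrand is merely $L^1_{\mathrm{loc}}$ in $t$), but as you note, keeping $F'$ in integral form and working with absolutely continuous quantities closes this gap. I find no error in the proposal.
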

	\begin{rmk}\label{remark}
		From  (\ref{eq1111}) and (\ref{eq1112}),   the sharp lifespan estimates  	for local in time solutions to (\ref{eq0010}) is given by 
		$$
		\begin{array}{ll}
			C \varepsilon^{-\frac{p-1}{p+1}} \leq T(\varepsilon) \leq C \varepsilon^{-\frac{p-1}{p+1}} & \text { if } u_{1} \neq 0 \\
			C \varepsilon^{-\frac{p-1}{2}} \leq T(\varepsilon) \leq C \varepsilon^{-\frac{p-1}{2}} & \text { if } u_{1}=0.
		\end{array}
		$$
		Thus the nontriviality of $u_{1}$ plays a crucial role in the lifespan estimates.
	\end{rmk}
	
	\begin{rmk}
		Here we note that the fractional Laplace-Beltrami operator  $(-\mathcal{L})^{\alpha}$ can be reduced to the classical Laplace-Beltrami operator    $-\mathcal{L}$ as $\alpha \rightarrow 1$ and all our results coincides with the results proved for the   Cauchy problem in \cite{31}.
	\end{rmk}

	Finally, we consider   the fractional  Klein-Gordon equation on  compact Lie group $G$  with  regular  mass term depending on the spatial variable;  namely  for $T>0$ and for $0<\alpha<1$, we consider the following Cauchy problem:
	\begin{align} \label{eq1111111}
		\begin{cases}
			\partial^2_tu(t, x)+\left( -\mathcal{L}\right)^\alpha u(t, x)+m(x)u(t, x)=0, & (t, x)\in[0, T]\times G,\\
			u(0,x)=u_0(x),  & x\in G,\\ \partial_tu(x,0)=u_1(x), & x\in G,
		\end{cases}
	\end{align}
	where $u_{0}(x)$ and $u_{1}(x)$ are two given functions on $G$.	Here  the mass function $m$ is supposed to be  non-negative and regular function, and in this case, we have the following result.
	\begin{thm}\label{thm11111}
		Let $m\in L^\infty (G)$ be a non-negative function.	Suppose that $ u_0\in H_{\mathcal{L}}^\alpha$ and $u_1\in L^2(G)$. Then, there exists a unique solution  $u\in\mathcal{C}([0,T],H_{\mathcal{L}}^\alpha)\cap \mathcal{C}^1([0,T],L^2(G))$ corresponding   to the  homogeneous Cauchy problem (\ref{eq1111111}) and it satisfies the following estimate
		\begin{align}\label{eq2}
			\|u(t, \cdot)\|_{H_{\mathcal{L}}^\alpha(G)}^2+\left\|\partial_{t} u(t, \cdot)\right\|_{L^{2}(G)}^2 \lesssim (1+\|m\|_{L^\infty(G)}) \left[ \|u_0\|_{H_{\mathcal{L}}^\alpha(G)}^2+\left\| u_1\right\|_{L^{2}(G)}^2\right] ,
		\end{align}
		uniformly in $t \in  [0, T]$.
	\end{thm}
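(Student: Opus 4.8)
The plan is to derive the stated bound from a conserved energy and to construct the solution by treating the bounded mass term $m(x)$ as a lower-order perturbation of the homogeneous fractional wave operator already analysed in Theorem~\ref{thm11}. Since $m\in L^\infty(G)$ is non-negative, multiplication by $m$ is a bounded self-adjoint operator on $L^2(G)$, so $A:=(-\mathcal{L})^\alpha+m$ is self-adjoint with domain $D((-\mathcal{L})^\alpha)$ and satisfies $\langle Af,f\rangle_{L^2}=\|(-\mathcal{L})^{\alpha/2}f\|_{L^2}^2+\int_G m|f|^2\,dx\ge 0$. The natural energy for \eqref{eq1111111} is
\[
E(t):=\tfrac12\Big(\|\partial_t u(t,\cdot)\|_{L^2(G)}^2+\|(-\mathcal{L})^{\alpha/2}u(t,\cdot)\|_{L^2(G)}^2+\int_G m(x)|u(t,x)|^2\,dx\Big).
\]
The central computation is the conservation of $E$: differentiating in $t$ and using the self-adjointness of $(-\mathcal{L})^{\alpha/2}$ and of multiplication by $m$,
\[
E'(t)=\operatorname{Re}\big\langle \partial_t^2u+(-\mathcal{L})^\alpha u+m\,u,\ \partial_t u\big\rangle_{L^2(G)}=0
\]
by the equation, so $E(t)\equiv E(0)$ on $[0,T]$. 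I would justify this identity rigorously by first taking smooth data (where all terms are classical) and then passing to the limit by density, which is exactly what the energy space $\mathcal{C}([0,T],H_{\mathcal{L}}^\alpha)\cap\mathcal{C}^1([0,T],L^2)$ permits.

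From conservation the estimate \eqref{eq2} follows quickly. First, $\|\partial_t u(t,\cdot)\|_{L^2}^2+\|(-\mathcal{L})^{\alpha/2}u(t,\cdot)\|_{L^2}^2\le 2E(0)$, and since $\int_G m|u_0|^2\le\|m\|_{L^\infty}\|u_0\|_{L^2}^2$ we get $2E(0)\lesssim(1+\|m\|_{L^\infty})(\|u_0\|_{H_{\mathcal{L}}^\alpha}^2+\|u_1\|_{L^2}^2)$. To control the remaining $L^2$-part of the $H_{\mathcal{L}}^\alpha$-norm I would use the fundamental theorem of calculus, $u(t)=u_0+\int_0^t\partial_t u(s)\,ds$, giving $\|u(t,\cdot)\|_{L^2}\le\|u_0\|_{L^2}+t\sqrt{2E(0)}$, which on the fixed interval $[0,T]$ is bounded by the same quantity after absorbing $T$ into the constant. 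Summing the three bounds yields \eqref{eq2}.

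Uniqueness is then immediate: the difference $w$ of two energy solutions with identical data solves the homogeneous problem with $w(0)=\partial_t w(0)=0$, so its energy vanishes at $t=0$ and hence for all $t$, forcing $\partial_t w\equiv 0$ and (with the $L^2$ bound above) $w\equiv 0$. For existence I would write \eqref{eq1111111} as $\partial_t^2 u+(-\mathcal{L})^\alpha u=-m\,u$ and apply Duhamel's principle with the homogeneous propagators from Theorem~\ref{thm11}: the solution is the fixed point of
\[
u\mapsto u^{\mathrm{lin}}(t)-\int_0^t S_{\alpha}(t-s)\big(m\,u(s)\big)\,ds,
\]
where $u^{\mathrm{lin}}$ solves the mass-free problem with the given data and $S_\alpha(\tau)=(-\mathcal{L})^{-\alpha/2}\sin(\tau(-\mathcal{L})^{\alpha/2})$ is the sine-type operator (acting as multiplication by $\tau$ on the kernel of $\mathcal{L}$). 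Because $m$ is bounded, this map is a contraction on $\mathcal{C}([0,T'],H_{\mathcal{L}}^\alpha)\cap\mathcal{C}^1([0,T'],L^2)$ for $T'$ small depending only on $\|m\|_{L^\infty}$, and linearity lets the local solution be continued over all of $[0,T]$ by iteration.

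The main obstacle I anticipate is the existence step rather than the estimate: the group Fourier transform diagonalises $(-\mathcal{L})^\alpha$ but not the multiplication operator $m(x)$, so \eqref{eq1111111} cannot be solved mode by mode as in the homogeneous case. This is precisely why I route existence through Duhamel's formula with $m\,u$ treated as a source and rely on the boundedness of $m$ to close the contraction; the a priori energy identity then guarantees that the constructed solution lies in the claimed energy space uniformly in $t\in[0,T]$.
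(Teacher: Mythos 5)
Your proof is correct, and it differs from the paper's in two respects worth noting. The energy-conservation step is identical: the paper also multiplies by $u_t$, identifies the conserved quantity $E(t)=\|u_t\|_{L^2}^2+\|(-\mathcal{L})^{\alpha/2}u\|_{L^2}^2+\|\sqrt{m}\,u\|_{L^2}^2$, and reads off the bounds on $\|\partial_t u\|_{L^2}$ and $\|(-\mathcal{L})^{\alpha/2}u\|_{L^2}$. Where you diverge is the control of $\|u(t,\cdot)\|_{L^2}$: you use the elementary identity $u(t)=u_0+\int_0^t\partial_t u(s)\,ds$ together with the conserved energy, absorbing $T$ into the constant, whereas the paper goes back to the Fourier side, treats $-m(x)u$ as a source, writes the Duhamel formula for each scalar ODE in the coefficients $\widehat{u}(t,\xi)_{k\ell}$, applies Cauchy--Schwarz in $s$, and sums via Plancherel to get $\|u(t,\cdot)\|_{L^2}^2\lesssim\|u_0\|_{L^2}^2+\|u_1\|_{L^2}^2+\int_0^T\|m\,u(s,\cdot)\|_{L^2}^2\,ds$, which is then closed using the energy bound on $\|\sqrt{m}\,u\|_{L^2}$. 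Your route is shorter and incidentally produces the power $(1+\|m\|_{L^\infty})$ stated in \eqref{eq2}, while the paper's chain of estimates actually lands on $(1+\|m\|_{L^\infty})^2$ for the $L^2$-part (see their final display before \eqref{eq004}). The second difference is that you supply an actual existence argument --- Duhamel with the mass term as a source plus a contraction on a short time interval, continued by linearity --- whereas the paper only derives the a priori estimate and deduces uniqueness from it, leaving the construction of the solution implicit; your observation that the Fourier transform does not diagonalise multiplication by $m(x)$, so the problem cannot be solved mode by mode, is exactly the reason this extra step is needed, and your proposal is the more complete of the two on this point.
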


	Before studying the nonhomogeneous Cauchy problem (\ref{eq0010}), we first deal with the corresponding homogeneous problem, i.e., when $f=0$. Particularly,  using the group Fourier transform with respect to the spatial variable,  we determine  $L^{2}-L^{2} $ estimates for the solution of the homogeneous fractional wave equation on the compact Lie group $G$.  
	Once we have these estimates,  applying a Gagliardo-Nirenberg type inequality on compact Lie groups \cite{27, 28, 31} (see  also \cite{Gall}  for  Gagliardo-Nirenberg type inequality on a more general frame of connected Lie groups),   we prove  the local well-posedness result for  (\ref{eq0010}). We establish a blow-up result to  (\ref{eq0010}) by using a comparison argument for ordinary differential inequality of second order.


	Apart from the introduction, this paper is organized as follows. In Section \ref{sec2},  we recall   the Fourier analysis on compact Lie groups which will be used frequently throughout the paper for our approach. In Section \ref{sec3},  
	we show an appropriate decomposition of the propagators for the nonlinear equation in the Fourier space. We also prove Theorem \ref{thm11} by deriving some  $L^{2}-L^{2}$ estimates for the solution of the homogeneous fractional wave equation on the compact Lie group $G$.  In Section \ref{sec4}, first we   briefly recall   the notion of mild solutions  in our framework and  prove   the local well-posedness of the   Cauchy problem   (\ref{eq0010})  in the energy evolution space  $\mathcal C\left([0,T],  H^\alpha_{\mathcal{L}}(G)\right)\cap\mathcal C^1\left([0,T],L^2(G)\right)$. In Section \ref{sec5},  under certain conditions on the initial data, a finite time blow-up result is established.    Finally, we consider a space-fractional wave equation with a regular mass term depending on the position and study the well-posedness of the space-fractional Klein-Gordon equation in Section \ref{sec6}.

	\section{Preliminaries} \label{sec2}
	
	\subsection{Notations} 
	throughout the article,  we use the following notations:  the notation $f \lesssim g$ means that there exists a positive constant $C$ such that $f \leq C g$; 		  $C$ denotes a suitable positive constant and may have different value from line to line;  we denote  $G$  as the  compact Lie group;   $\mathcal{L}$ denotes the Laplace-Beltrami operator on $G;$    
	$ \operatorname{Tr}(A)=\sum_{j=1}^{d} a_{j j}$  denotes the trace  of the matrix $A=\left(a_{i j}\right)_{1 \leq i, j \leq  d} $; $I_{d} \in \mathbb{C}^{d \times d}$ denotes the identity matrix;	
	$\;d x$ stands for the normalized Haar measure on the compact group $G$;   and $[x]$ denotes the greatest integer function of $x$.
	\subsection{Group Fourier transform}
	In this subsection, we recall some basics of Fourier analysis on compact Lie groups to make the paper self-contained. A complete account of representation theory and noncommutative Fourier analysis of compact Lie  groups can be found in \cite{garetto, RT13, RuzT}. However, we mainly adopt the notation and terminology given in \cite{RuzT}.

	Let $G$ be a compact Lie group.  	A continuous unitary representation $\xi$ of dimension $d_{\xi}$   is a continuous group homomorphism from $ {G}$ onto the group of unitary matrices,  $ {U}$ of order $d_{\xi}\times d_{\xi}$, i.e., $\xi(x y)=\xi(x) \xi(y)$ and $\xi(x)^{*}=\xi(x)^{-1}$ for all $x, y \in {G}$. 
	Two representations $\xi, \eta$ of ${G}$ are called  equivalent if there exists an invertible intertwining operator $T$ such that $T \xi(x)=\eta(x) T$ for any $x \in {G}$. Also, a subspace $M \subset \mathbb{C}^{d_{\xi}}$ is said to be invariant under the unitary representation $\xi$ if $\xi(x) M \subset M$ for any $x \in {G}$. The unitary representation $\xi$ is said to be irreducible if the only   $\xi$ invariant closed subspaces of $ \mathbb{C}^{d_{\xi}}$ are  $\{0\}$ and $ \mathbb{C}^{d_{\xi}}$.

	The unitary dual of ${G}$, denoted by $\widehat{{G}}$,  is  the collection of  equivalence classes  $[\xi]$  of continuous irreducible unitary representation $\xi: {G} \rightarrow \mathbb{C}^{d_{\xi} \times d_{\xi}}$.  Since $G$ is compact, the set $\widehat{G}$ is discrete. Thus for  $[\xi] \in \widehat{G}$, by choosing a basis in the representation space of $\xi$, one  can view $\xi$ as a matrix-valued function $\xi: G \rightarrow \mathbb{C}^{d_{\xi} \times d_{\xi}}$.
	By the Peter-Weyl theorem, the collection
	$$
	\left\{\sqrt{d_{\xi}} \xi_{i j}: 1 \leq i, j \leq d_{\xi},[\xi] \in \widehat{G}\right\}
	$$
	is an orthonormal basis of $L^{2}(G)$, where the matrix coefficients $ \xi_{i j}:G\to \mathbb{C}$ of $\xi$ are continuous functions for all $i, j \in\left\{1, \ldots, d_{\xi}\right\}$.
	
	Let  $f \in L^{1}(G)$. Then the group Fourier transform of $f$ at $\xi\in \widehat{G}$ is defined by
	$$
	\widehat{f}(\xi):=\int_{G} f(x) \xi(x)^{*} d x,
	$$
	where $d x$ is the normalised Haar measure on $G$.  Since  $\xi$ is a matrix representation, we have $\widehat{f}(\xi) \in \mathbb{C}^{d_{\xi} \times d_{\xi}}$.  If $f\in L^2(G)$, by the Peter-Weyl theorem,   the Fourier series representation for $f$ is given by
	$$
	f(x)=\sum_{[\xi] \in \widehat{G}} d_{\xi} \operatorname{Tr}(\xi(x) \widehat{f}(\xi)).
	$$
	Moreover, for $f \in L^2(G)$,  Plancherel formula on  $G$ takes the following  form
	\begin{align}\label{eq002}
		\|f\|_{L^{2}(G)}=\left(\sum_{[\xi] \in \widehat{G}} d_{\xi}\|\widehat{f}(\xi)\|_{\mathrm{HS}}^{2}\right)^{1 / 2},
	\end{align}
	where the Hilbert-Schmidt norm of  $\widehat{f}(\xi) $ is defined  as 
	$$	\|\widehat{f}(\xi)\|_{\mathrm{HS}}^{2}=\operatorname{Tr}\left(\widehat{f}(\xi) \widehat{f}(\xi)^{*}\right)=\sum_{i, j=1}^{d_{\xi}}|\widehat{f}(\xi)_{ij}|^2 ,$$
	which gives a norm on $\ell^{2}(\widehat{G})$.

	Let $\mathcal{L}$ be the  Laplace-Beltrami operator on the compact Lie group $G$. 	It is   important to understand the behavior of the group Fourier transform with respect to the Laplace–Beltrami operator $\mathcal{L}$ for our analysis.  For  $[\xi] \in \widehat{{G}}$,   the matrix elements   $\xi_{i j}$,  are  the eigenfunctions of $\mathcal{L}$ with the same   eigenvalue $-\lambda_{\xi}^{2}$, i.e.,    for any $ x \in {G}$
	$$
	-\mathcal{L} \xi_{i j}(x)=\lambda_{\xi}^{2} \xi_{i j}(x),   \qquad \text{for all } i, j \in\left\{1, \ldots, d_{\xi}\right\}.
	$$
	In other words, the symbol of  the  Laplace-Beltrami operator $\mathcal{L}$ is given by 
	\begin{align}\label{symbol}
		\sigma_{\mathcal{L}}(\xi)=-\lambda_{\xi}^{2} I_{d_{\xi}},
	\end{align}
	for any $[\xi] \in \widehat{{G}}$,
	where $I_{d_{\xi}} \in \mathbb{C}^{d_{\xi} \times d_{\xi}}$ denotes the identity matrix. Thus   $$\widehat{\mathcal{L} f}(\xi)=\sigma_{\mathcal{L}}(\xi) \widehat{f}(\xi)=-\lambda_{\xi}^{2} \widehat{f}(\xi)$$ for any $[\xi] \in \widehat{ G}$. 
	Further,  using the Plancherel formula,  for any $\alpha>0$, we have 
	$$
	\left\|(-\mathcal{L})^{\alpha / 2} f\right\|_{L^{2}({G})}^{2}=\sum_{[\xi] \in \widehat{{G}}} d_{\xi} \lambda_{\xi}^{2 \alpha}\|\widehat{f}(\xi)\|_{\mathrm{HS}}^{2}
	.	 	$$
	For $\alpha>0,$ the   Sobolev space $H_{\mathcal{L}}^\alpha\left(G\right)$ is defined as follows: 
	$$H_{\mathcal{L}}^\alpha(G)=\left\{u \in L^{2}(G):\|u\|_{H_{\mathcal{L}}^\alpha(G)}<+\infty\right\},$$ where $\|u\|_{H_{\mathcal{L}}^\alpha(G)}=\|u\|_{L^{2}(G)}+\left\|(-\mathcal{L})^{\alpha / 2} u\right\|_{L^{2}({G})}$ and    $(-\mathcal{L})^{\alpha / 2} $  is     the fractional Laplace-Beltrami operator  defined in terms of the Fourier transform, i.e., 
	$$(-\mathcal{L})^{\alpha / 2} f =\mathcal{F}^{-1}\left(\lambda_{\xi}^{2 \alpha }(\mathcal{F} u)\right),  	\quad  \text{for all $[\xi] \in \widehat{{G}}$}.$$

	

	\section{Fourier multiplier expressions and $L^2(G)-L^2(G)$ estimates 
	}\label{sec3}
	In this section, we derive $L^2(G)– L^2(G)$ estimates for the solutions of  the homogeneous problem     (\ref{eq1}).
	We  employ the group Fourier transform on the compact group $G$ with respect to  the space variable $x$  together with the Plancherel identity in order to      estimate     $L^2$-norms of  $u(t, ·), (-\mathcal{L})^{\frac{\alpha}{2}}u(t, \cdot)$, and $\partial_{t}u(t, ·)$. 
	
	Let $u$ be a solution to (\ref{eq1}). Let $\widehat{u}(t, \xi)=(\widehat{u}(t, \xi)_{kl})_{1\leq k, l\leq d_\xi}\in \mathbb{C}^{d_\xi\times d_\xi}, [\xi]\in\widehat{ G}$ denote the Fourier transform of $u$  with respect to the $x $ variable. Invoking the group Fourier transform with respect to $x$ on   (\ref{eq1}), we deduce that $\widehat{u}(t, \xi)$ is  a solution to the following  Cauchy problem for the system of ODE's (with size of the system that depends on the representation $\xi$)
	\begin{align}\label{eq6661}
		\begin{cases}
			\partial^2_t\widehat{u}(t,\xi)+(-	\sigma_{\mathcal{L}}(\xi))^\alpha \widehat{u}(t,\xi) =0,& [\xi]\in\widehat{ G},~t>0,\\ \widehat{u}(0,\xi)=\widehat{u}_0(\xi), &[\xi]\in\widehat{ G},\\ \partial_t\widehat{u}(0,\xi)=\widehat{u}_1(\xi), &[\xi]\in\widehat{ G},
		\end{cases} 
	\end{align}
	where  $\sigma_{\mathcal{L}}$	is the symbol of  the  operator operator $\mathcal{L}$.  Using the identity (\ref{symbol}),  the   system  (\ref{eq6661}) can be written in the form of $d_\xi^2$ independent   ODE's, namely,
	\begin{align}\label{eqq7}
		\begin{cases}
			\partial^2_t\widehat{u}(t,\xi)_{kl}+ \lambda_\xi^{2\alpha }  \widehat{u}(t,\xi)_{kl}= 0,& [\xi]\in\widehat{ G},~t>0,\\ \widehat{u}(0,\xi)_{kl}=\widehat{u}_0(\xi)_{kl}, &[\xi]\in\widehat{ G},\\ \partial_t\widehat{u}(0,\xi)_{kl}=\widehat{u}_1(\xi)_{kl}, &[\xi]\in\widehat{ G},
		\end{cases}
	\end{align}
	for all $k,l\in\{1,2,\ldots,d_\xi\}.$ 	Then    the characteristic equation of (\ref{eqq7}) is given by
	\[\lambda^2+\lambda_\xi^{2\alpha } =0,\]
	and consequently the characteristic roots are   $\lambda=\pm i \lambda_\xi^\alpha  $. 
	%
	Thus the solution of the   homogeneous  problem   (\ref{eqq7}) is given by 
	\begin{align}\label{number2}
		\widehat{u}(t,\xi)_{kl}=A_0(t, \xi) \widehat{u}_0(\xi)_{kl}+A_1(t, \xi) \widehat{u}_1(\xi)_{kl},
	\end{align}
	where
	\begin{align}\label{number1}
		&A_0(t, \xi)=\begin{cases}
			\cos \left(t \lambda_\xi^\alpha \right)& \text{if }\lambda_\xi^\alpha\neq 0,\\
			1& \text{if }\lambda_\xi^\alpha= 0,
		\end{cases} & A_1(t, \xi)=\begin{cases}
			\frac{\sin \left(t \lambda_\xi^\alpha \right)}{\lambda_\xi^\alpha }& \text{if }\lambda_\xi^\alpha\neq 0,\\
			t& \text{if }\lambda_\xi^\alpha= 0.
		\end{cases} 
	\end{align}
	We 	notice that $A_0(t, \xi) = \partial_t A_1(t, \xi)$ for any $[\xi] \in \widehat{ G}$. We  also note that $0$ is an eigenvalue for the continuous irreducible unitary representation $1 : x \in G \to  1 \in  \mathbb{C}$.  Now we        estimate     $L^2$-norms of  $u(t, ·), (-\mathcal{L})^{\frac{\alpha}{2}}u(t, \cdot)$, and $\partial_{t}u(t, ·)$.

	\textbf{Estimate for $\|u(t, \cdot )\|_{L^{2}(G)}$:}
	From the equations (\ref{number2}) and (\ref{number1}), it follows that 
	\begin{align*}
		\left|\widehat{u}(t, \xi)_{k \ell}\right| &\leq |A_0(t, \xi)| \left|\widehat{u}_{0}(\xi)_{k \ell}\right|
		+ |A_1(t, \xi)|  \left|\widehat{u}_{1}(\xi)_{k \ell}\right|  \\&\leq    \left|\widehat{u}_{0}(\xi)_{k \ell}\right|+  t\left|\widehat{u}_{1}(\xi)_{k \ell}\right|, \quad \text { for any } t \geq 0.
	\end{align*}
	Consequently,   using Plancherel formula (\ref{eq002}), we obtain
	\begin{align}\label{L2}
		\|u(t, \cdot)\|_{L^{2}(G)}^{2}&=\sum_{[\xi] \in \widehat{G}} d_{\xi} \sum_{k, \ell=1}^{d_{\xi}}\left|\widehat{u}(t, \xi)_{k \ell}\right|^{2} \nonumber \\ \lesssim &\sum_{[\xi] \in \widehat{G}} d_{\xi} \sum_{k, \ell=1}^{d_{\xi}}\left(\left|\widehat{u}_{0}(\xi)_{k \ell}\right|^{2}+t^{2}\left|\widehat{u}_{1}(\xi)_{k \ell}\right|^{2}\right) \nonumber \\
		&=\left\|u_{0}\right\|_{L^{2}(G)}^{2}+t^{2}\left\|u_{1}\right\|_{L^{2}(G)}^{2} .
	\end{align}
	
	\textbf{Estimate for $\left\|(-\mathcal{L})^{\alpha / 2} u(t, \cdot )\right\|_{L^{2}(G)}$:}
	Using  Plancherel formula, we  get
	\begin{align}\label{f1}
		\left\|(-\mathcal{L})^{\alpha / 2} u(t, \cdot)\right\|_{L^{2}(G)}^2 \nonumber &=\sum_{[\xi] \in \widehat{G}} d_{\xi}  \left\|\sigma_{(-\mathcal{L})^{\alpha / 2}}(\xi)\widehat{u}(t, \xi) \right\|_{HS}^{2} \\=&\sum_{[\xi] \in \widehat{G}} d_{\xi} \sum_{k, \ell=1}^{d_{\xi}}\lambda_\xi^{2\alpha }\left|\widehat{u}(t, \xi)_{k \ell}\right|^{2}.\end{align}
	Now from     (\ref{number2}) and (\ref{number1}), it follows that 
	$$
	\lambda_\xi^{\alpha }\left|\widehat{u}(t, \xi)_{k \ell}\right| \leq 	\lambda_\xi^{\alpha } \left|\widehat{u}_{0}(\xi)_{k \ell}\right|+\left|\widehat{u}_{1}(\xi)_{k \ell}\right|. 
	$$
	Thus  by (\ref{f1}) and  the Plancherel identity, we obtain 
	
	\begin{align}\label{-L}\nonumber
		\left\|(-\mathcal{L})^{\alpha / 2} u(t, \cdot)\right\|_{L^{2}(G)}^2&=\sum_{[\xi] \in \widehat{G}} d_{\xi} \sum_{k, \ell=1}^{d_{\xi}}\lambda_\xi^{2\alpha }\left|\widehat{u}(t, \xi)_{k \ell}\right|^{2}\\\nonumber
		& \lesssim \sum_{[\xi] \in \widehat{G}} d_{\xi} \sum_{k, \ell=1}^{d_{\xi}}\left(\lambda_{\xi}^{{2\alpha }}\left|\widehat{u}_{0}(\xi)_{k \ell}\right|^{2}+\left|\widehat{u}_{1}(\xi)_{k \ell}\right|^{2}\right) \\
		&=\left\|u_{0}\right\|_{H_{\mathcal{L}}^{{\alpha }}(G)}^{2}+\left\|u_{1}\right\|_{L^{2}(G)}^{2}.
	\end{align}
	
	\textbf{Estimate for $\left\|\partial_{t} u(t, \cdot )\right\|_{L^{2}(G)}$:} From (\ref{number2}) and  (\ref{number1}),      for any $[\xi] \in \widehat{G}$ and any $k, \ell \in\left\{1, \ldots, d_{\xi}\right\}$, an 	elementary computation  gives that 
	$$
	\partial_{t} \widehat{u}(t, \xi)_{k \ell}=-\lambda_{\xi}^{2\alpha } A_{1}(t, \xi) \widehat{u}_{0}(\xi)_{k\ell}+A_{0}(t, \xi) \widehat{u}_{1}(\xi)_{k \ell}.
	$$
	Thus 
	$$
	\left|\partial_{t} \widehat{u}(t, \xi)_{k \ell}\right| \leq \lambda_{\xi}^{\alpha}\left|\widehat{u}_{0}(\xi)_{k \ell}\right|+\left|\widehat{u}_{1}(\xi)_{k \ell}\right| .
	$$
	Thus the Plancherel formula yields that 
	\begin{align}\label{deri}\nonumber
		\left\|\partial_{t} u(t, \cdot )\right\|_{L^{2}(G)}^2 & \lesssim \sum_{[\xi] \in \widehat{G}} d_{\xi} \sum_{k, \ell=1}^{d_{\xi}}\left(\lambda_{\xi}^{{2\alpha }}\left|\widehat{u}_{0}(\xi)_{k \ell}\right|^{2}+\left|\widehat{u}_{1}(\xi)_{k \ell}\right|^{2}\right) \\
		&=\left\|u_{0}\right\|_{H_{\mathcal{L}}^{{\alpha }}(G)}^{2}+\left\|u_{1}\right\|_{L^{2}(G)}^{2}.
	\end{align}

	Now, we are in a position to prove  Theorem  \ref{thm11}.

	\begin{proof}[Proof of theorem \ref{thm11}]
		The proof of Theorem \ref{thm11} follows from  the       estimates  (\ref{L2}), (\ref{-L}), and  (\ref{deri})  for $\|u(t, \cdot )\|_{L^{2}(G)}$, $\left\|(-\mathcal{L})^{\alpha / 2} u(t, \cdot )\right\|_{L^{2}(G)}$, and 	$\left\|\partial_{t} u(t, \cdot )\right\|_{L^{2}(G)}$, respectively.
	\end{proof}

	\section{Local existence}\label{sec4}
	This section is devoted to  prove  Theorem \ref{thm22}, that is,   the local well-posedness of the   Cauchy problem   (\ref{eq0010})  in the energy evolution space  $\mathcal C\left([0,T],  H^\alpha_{\mathcal{L}}(G)\right)\cap\mathcal C^1\left([0,T],L^2(G)\right)$.  

	%
	To present the proof of Theorem \ref{thm22}, first we recall some notations. Consider the space \[X(T):=\mathcal{C}\left([0,T],  H^\alpha_{\mathcal L}(G)\right)\cap\mathcal C^1\left([0,T],L^2(G)\right),\] equipped with the norm
	\begin{align}\label{eq33333} 
		\|u\|_{X(T)}&:=\sup\limits_{t\in[0,T]}\left ( a(t)^{-1}\|u(t,\cdot)\|_{L^2(G)}+\|(-\mathcal L)^{\alpha/2}u(t,\cdot)\|_{L^2(G)}+\|\partial_tu(t,\cdot)\|_{L^2(G)}\right ),
	\end{align}
	where
	$$
	a(t) =\left\{\begin{array}{ll}
		1+t & \text { if } u_{1} \neq 0, \\
		1 & \text { if } u_{1}=0.
	\end{array}\right.
	$$
	Here we would like to note that the factor $ a(t)^{-1}$ inside the norm  in  (\ref{eq33333}) appears due to the estimate of  $	\|u(t, \cdot)\|_{L^{2}(G)}\lesssim a(t)( \left\|u_{0}\right\|_{L^{2}(G)}+\left\|u_{1}\right\|_{L^{2}(G)}  )$ given in  (\ref{111111}).
	
	Here we  briefly recall   the notion of mild solutions  in our framework to  the Cauchy problem (\ref{eq0010})  and  will analyze our approach to prove Theorem \ref{thm22}. Applying Duhamel’s principle, the solution to the nonlinear inhomogeneous problem
	\begin{align}\label{eq3111}
		\begin{cases}
			\partial^2_tu+(-\mathcal{L})^\alpha u =F(t, x), & x\in G,t>0,\\
			u(0,x)=  u_0(x),  & x\in G,\\ \partial_tu(0, x)=  u_1(x), & x\in G,
		\end{cases}
	\end{align}
	can be expressed as
	$$ u(t, x)= u_{0}(x)*_{(x)}E_{0}(t, x)+u_{1}(x)*_{(x)}E_{1}(t, x) +\int_{0}^{t} F(s, x)*_{(x)} E_{1}(t-s, x) \;d s,  $$
	where $*_{(x)}$ denotes the convolution with respect to the $x$ variable, $E_{0}(t, x)$ and $E_{1}(t, x)$  are  the fundamental solutions to the homogeneous problem (\ref{eq3111}), i.e., when  $F=0$ with initial data $\left(u_{0}, u_{1}\right)=\left(\delta_{0}, 0\right)$ and $\left(u_{0}, u_{1}\right)=$ $\left(0, \delta_{0}\right)$, respectively. 
	For any left-invariant differential operator $L$ on the compact Lie group $ {G}$, we applied  the property  that $L\left(v*_{(x)} E_{1}(t, \cdot)\right)=v *_{(x)} L\left(E_{1}(t, \cdot)\right)$ and   the invariance by time translations for the wave operator $	\partial^2_t+(-\mathcal{L})^\alpha $
	in order to get the previous representation formula.

	Thus, the function  $u$ is  said to be a mild solution to (\ref{eq3111})  on $[0, T]$ if $u$ is a fixed point for  the       integral operator  $N: u \in X(T) \rightarrow N u(t, x) $ defined as 
	\begin{align}\label{f2} 
		N u(t, x)= \varepsilon u_{0}(x) *_{(x)}  E_{0}(t, x)+\varepsilon u_{1}(x) *_{(x)}  E_{1}(t, x) +\int_{0}^{t}|u(s, x)|^{p} *_{(x)}  E_{1}(t-s, x) \;ds
	\end{align}
	in the evolution space $X(T) \doteq \mathcal{C}\left([0, T], H_{\mathcal{L}}^{\alpha}(G)\right) \cap \mathcal{C}^{1}\left([0, T], L^{2}(G)\right)$, equipped with the norm defined in  (\ref{eq33333}). 
	In order to prove $N$ admits a uniquely determined fixed point for sufficiently small $T=T(\varepsilon)$, we use Banach's fixed point theorem with respect to the norm on $X(T)$ as defined above.   More preciously, for $\left\|\left(u_{0}, u_{1}\right)\right\|_{H_{\mathcal{L}}^{\alpha}(G) \times L^{2}(G)}$   small enough,  if we can show the validity of the following two  inequalities
	$$\|N u\|_{X(T)} \leq C\left\|\left(u_{0}, u_{1}\right)\right\|_{H_{\mathcal{L}}^{\alpha}(G) \times L^{2}(G)}+C\|u\|_{X(T)}^{p},$$
	$$\|N u-N v\|_{X(T)} \leq C\|u-v\|_{X(T)}\left(\|u\|_{X(T)}^{p-1}+\|v\|_{X(T)}^{p-1}\right),$$
	for any $u, v \in X(T)$ and for some  suitable constant $C>0$ independent of $T$. Then by Banach's fixed point theorem  we can assure  that the operator $N$ admits a uniquely determined fixed point $u$.  This     function $u$ will be  our mild solution to (\ref{eq3111})  on $[0, T]$.

	In order to prove the local existence result, an important tool is the following Gagliardo-Nirenberg type inequality. We refer to   \cite{Gall} for the detailed proof of the inequality. 
	\begin{lem}\cite{Gall} \label{lemma1}
		Let $G$ be a connected unimodular Lie group with topological dimension $n.$ For any $1<q_0<\infty,~0<q,q_1<\infty$ and $0<\alpha<n$ such that $q_0<\frac{n}{\alpha},$ the following Gagliardo-Nirenberg type inequality holds
		\begin{align}\label{eq33}
			\|f\|_{L^q(G)}\lesssim \|f\|^\theta_{H^{\alpha,q_0}_\mathcal L(G)}\|f\|^{1-\theta}_{L^{q_1}(G)}
		\end{align} 
		for all $f\in H^{\alpha,q_0}_\mathcal L(G)\cap L^{q_1}(G),$ provided that
		\begin{align*}
			\theta=\theta(n,\alpha,q,q_0,q_1)=\frac{\frac{1}{q_1}-\frac{1}{q}}{\frac{1}{q_1}-\frac{1}{q_0}+\frac{\alpha}{n}}\in[0,1].
		\end{align*}
	\end{lem}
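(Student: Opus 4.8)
The plan is to obtain the inequality by combining a single fractional Sobolev embedding with a one-step H\"older interpolation. Introduce the critical Sobolev exponent $q^{\ast}$ defined by $\frac{1}{q^{\ast}}=\frac{1}{q_0}-\frac{\alpha}{n}$; the hypothesis $q_0<\frac{n}{\alpha}$ guarantees $q^{\ast}>q_0>0$. The inequality will follow once I establish the embedding $H^{\alpha,q_0}_{\mathcal L}(G)\hookrightarrow L^{q^{\ast}}(G)$ and then interpolate $L^{q}$ between $L^{q_1}$ and $L^{q^{\ast}}$ by H\"older, the exponent $\theta$ emerging automatically as a convex-combination coefficient.

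\emph{Step 1 (fractional Sobolev embedding).} Since $1<q_0<\infty$, a spectral (Mikhlin--H\"ormander type) multiplier theorem, available on unimodular Lie groups, gives the norm equivalence $\|f\|_{H^{\alpha,q_0}_{\mathcal L}(G)}\approx\|(I-\mathcal L)^{\alpha/2}f\|_{L^{q_0}(G)}$. Writing $g=(I-\mathcal L)^{\alpha/2}f$, so that $f=(I-\mathcal L)^{-\alpha/2}g=G_\alpha\ast g$, I would use the subordination formula $G_\alpha=\tfrac{1}{\Gamma(\alpha/2)}\int_0^\infty t^{\alpha/2-1}e^{-t}p_t\,dt$, where $p_t$ is the heat kernel of $\mathcal L$. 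Invoking the Gaussian upper bound $p_t(x)\lesssim t^{-n/2}e^{-c\,d(e,x)^2/t}$ for $0<t\le 1$ (governed by the topological dimension $n$) and using the factor $e^{-t}$ to control the large-$t$ range, one shows that $G_\alpha(x)\lesssim d(e,x)^{\alpha-n}$ near the identity with rapid decay at infinity; thus $G_\alpha$ behaves locally like a Riesz kernel of order $\alpha$. The Hardy--Littlewood--Sobolev inequality then yields
\begin{align*}
\|f\|_{L^{q^{\ast}}(G)}=\|G_\alpha\ast g\|_{L^{q^{\ast}}(G)}\lesssim\|g\|_{L^{q_0}(G)}\approx\|f\|_{H^{\alpha,q_0}_{\mathcal L}(G)}.
\end{align*}

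\emph{Step 2 (interpolation and identification of $\theta$).} The condition $\theta\in[0,1]$ is exactly what makes $\frac{1}{q}$ a convex combination of $\frac{1}{q_1}$ and $\frac{1}{q^{\ast}}$, namely $\frac{1}{q}=\frac{1-\theta}{q_1}+\frac{\theta}{q^{\ast}}$. Splitting $|f|^{q}=|f|^{q(1-\theta)}|f|^{q\theta}$ and applying H\"older with the conjugate exponents $\frac{q_1}{q(1-\theta)}$ and $\frac{q^{\ast}}{q\theta}$ (the endpoints $\theta=0,1$ being trivial) gives $\|f\|_{L^{q}(G)}\le\|f\|_{L^{q_1}(G)}^{1-\theta}\|f\|_{L^{q^{\ast}}(G)}^{\theta}$. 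Substituting the embedding of Step~1 into the $L^{q^{\ast}}$ factor produces the stated inequality, and a direct computation from $\frac{1}{q^{\ast}}=\frac{1}{q_0}-\frac{\alpha}{n}$ recovers the precise value, since $\frac{1}{q_1}-\frac{1}{q}=\theta\big(\frac{1}{q_1}-\frac{1}{q_0}+\frac{\alpha}{n}\big)$, i.e. $\theta=\big(\frac{1}{q_1}-\frac{1}{q}\big)\big/\big(\frac{1}{q_1}-\frac{1}{q_0}+\frac{\alpha}{n}\big)$, matching the hypothesis.

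The interpolation in Step~2 is purely algebraic and routine. The genuine difficulty is Step~1: on a general connected unimodular Lie group one cannot rely on the group Fourier transform used elsewhere in the paper, and the embedding must instead be built from heat-kernel Gaussian bounds and fractional integration. The delicate points are the pointwise estimate for the Bessel-type potential kernel $G_\alpha$ (where the \emph{local} dimension $n$ rather than any global growth exponent controls the near-diagonal singularity), the spectral-multiplier norm equivalence that lets one replace $(-\mathcal L)^{\alpha/2}$ by the strictly positive $(I-\mathcal L)^{\alpha/2}$ (thereby avoiding the zero-eigenvalue obstruction on finite-volume groups), and the verification that the large-distance decay of $G_\alpha$ is harmless for the Hardy--Littlewood--Sobolev estimate. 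For these reasons I would, as the authors do, lean on the detailed argument of \cite{Gall} for Step~1 and present the interpolation as the elementary remaining reduction.
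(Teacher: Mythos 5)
The paper offers no internal proof of Lemma \ref{lemma1}: it is imported verbatim from \cite{Gall}, so there is no argument of the authors to diverge from. Your reconstruction is the standard route and is essentially how the cited reference proceeds on unimodular groups: a Bessel-potential Sobolev embedding $H^{\alpha,q_0}_{\mathcal L}(G)\hookrightarrow L^{q^{\ast}}(G)$ with $\frac{1}{q^{\ast}}=\frac{1}{q_0}-\frac{\alpha}{n}$, followed by the Lyapunov--H\"older interpolation $\|f\|_{L^q(G)}\le\|f\|_{L^{q_1}(G)}^{1-\theta}\|f\|_{L^{q^{\ast}}(G)}^{\theta}$. Your identification of $\theta$ is correct, and Step~2 is valid on the full quasi-norm range $0<q,q_1<\infty$ since it only uses H\"older applied to the splitting $|f|^{q}=|f|^{q(1-\theta)}|f|^{q\theta}$; the hypothesis $q_0<\frac{n}{\alpha}$ indeed makes $q^{\ast}$ finite and positive, and $\alpha<n$ is exactly what makes the near-diagonal bound $G_\alpha(x)\lesssim d(e,x)^{\alpha-n}$ locally weak-$L^{n/(n-\alpha)}$, so the weak-Young/Hardy--Littlewood--Sobolev step for $1<q_0<q^{\ast}<\infty$ goes through; the $e^{-t}$ factor does place the $t\ge 1$ part of the subordination integral in $L^1\cap L^\infty$, which is harmless by Young's inequality. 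Deferring these kernel details to \cite{Gall}, as the authors defer the entire lemma, is appropriate.

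One justification in your Step~1 is wrong as stated, though repairable. A Mikhlin--H\"ormander type spectral multiplier theorem is \emph{not} available on general connected unimodular Lie groups: unimodularity does not exclude exponential volume growth (semisimple groups such as $\mathrm{SL}(2,\mathbb{R})$ are unimodular), and on such groups $L^p$-boundedness of $m(-\mathcal L)$ forces $m$ to extend holomorphically to a strip or sector (the Clerc--Stein phenomenon), so finite-order Mikhlin conditions do not suffice. Fortunately the two multipliers you actually need for the equivalence $\|f\|_{H^{\alpha,q_0}_{\mathcal L}(G)}\approx\|(I-\mathcal L)^{\alpha/2}f\|_{L^{q_0}(G)}$, namely $m(\lambda)=(1+\lambda)^{\alpha/2}\left(1+\lambda^{\alpha/2}\right)^{-1}$ and its reciprocal, are bounded and holomorphic in a sector containing $(0,\infty)$, so the equivalence follows instead from Stein's universal multiplier theorem for symmetric diffusion semigroups (equivalently, the bounded $H^{\infty}$-calculus of $-\mathcal L$ on $L^{q_0}(G)$ for $1<q_0<\infty$), which does hold in this generality. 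With that substitution your outline is sound.
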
 
	We refer to \cite{Gall, 27}  for several immediate important  remarks from Lemma \ref{lemma1}.
		%
		An version of Lemma \ref{lemma1} which is useful in our setting is the following result. 
		\begin{cor}
			Let $G$ be a connected unimodular Lie group with topological dimension $n\geq 2[\alpha]+2$.  For any $q\ge2$ such that $q\leq\frac{2n}{n-2\alpha}$, the following Gagliardo-Nirenberg type inequality holds
			\begin{align}\label{eq34}
				\|f\|_{L^q(G)}\lesssim \|f\|^{\theta(n, q, \alpha)}_{H^{\alpha }_\mathcal L(G)}\|f\|^{1-\theta(n, q, \alpha)}_{L^{2}(G)}
			\end{align} 
			for all $f\in H^{\alpha}_\mathcal L(G)$, where $\theta(n, q, \alpha)=\frac{n}{\alpha}\left(\frac{1}{2}-\frac{1}{q} \right) $.
		\end{cor}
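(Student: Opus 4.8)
The plan is to obtain the Corollary as a direct specialization of Lemma \ref{lemma1}. Specifically, I would apply that lemma with the choice $q_0 = 2$ and $q_1 = 2$, keeping $q$ as the free exponent. With $q_0 = 2$ the Sobolev factor becomes $H^{\alpha, 2}_{\mathcal L}(G) = H^\alpha_{\mathcal L}(G)$, and with $q_1 = 2$ the lower-order factor becomes the $L^2(G)$ norm, so the right-hand side of (\ref{eq33}) takes exactly the form claimed in (\ref{eq34}). The entire argument is thus a matter of substituting parameters and checking that the hypotheses of Lemma \ref{lemma1} are satisfied.

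First I would compute the interpolation parameter $\theta$ under these choices. Substituting $q_0 = q_1 = 2$ into $\theta = \frac{1/q_1 - 1/q}{1/q_1 - 1/q_0 + \alpha/n}$ makes the terms $1/q_1$ and $-1/q_0$ in the denominator cancel, leaving $\theta = \frac{1/2 - 1/q}{\alpha/n} = \frac{n}{\alpha}\left(\frac{1}{2} - \frac{1}{q}\right)$, which is precisely the quantity $\theta(n, q, \alpha)$ in the statement.

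The remaining work is to verify the structural hypotheses of Lemma \ref{lemma1} and that $\theta \in [0,1]$. The condition $q_0 < n/\alpha$ reads $2 < n/\alpha$, i.e. $n > 2\alpha$; this follows from $n \geq 2[\alpha] + 2 > 2\alpha$, which simultaneously guarantees $n - 2\alpha > 0$, so that the endpoint $\frac{2n}{n-2\alpha}$ is a finite positive number. For the admissibility of $\theta$: the inequality $\theta \geq 0$ is equivalent to $\frac{1}{2} - \frac{1}{q} \geq 0$, i.e. $q \geq 2$; and $\theta \leq 1$ is equivalent to $\frac{1}{2} - \frac{1}{q} \leq \frac{\alpha}{n}$, which rearranges to $q \leq \frac{2n}{n-2\alpha}$. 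Hence the prescribed range $2 \leq q \leq \frac{2n}{n-2\alpha}$ is exactly the set of exponents for which $\theta(n,q,\alpha) \in [0,1]$, and the estimate (\ref{eq34}) follows immediately from (\ref{eq33}).

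I do not anticipate a genuine obstacle here, since the content is entirely a book-keeping specialization of the already-proved Lemma \ref{lemma1}. The only point requiring mild care is the algebraic verification that the stated interval for $q$ coincides precisely with the admissibility condition $\theta \in [0,1]$, together with confirming that the choice $q_0 = 2$ meets the requirement $q_0 < n/\alpha$ under the dimensional hypothesis $n \geq 2[\alpha]+2$.
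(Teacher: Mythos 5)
Your proposal is correct and is exactly the intended derivation: the paper states this Corollary as an immediate specialization of Lemma \ref{lemma1} with $q_0=q_1=2$, and your verification that $\theta$ reduces to $\frac{n}{\alpha}\left(\frac{1}{2}-\frac{1}{q}\right)$, that $n\geq 2[\alpha]+2>2\alpha$ gives $q_0=2<n/\alpha$, and that $\theta\in[0,1]$ is equivalent to $2\leq q\leq\frac{2n}{n-2\alpha}$ is precisely the book-keeping the paper leaves implicit.
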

		
		\begin{proof}[Proof of Theorem \ref{thm22}]
			The  expression    (\ref{f2})  can be wriiten as  $N u=u^\sharp+I[u]$, where 
			\begin{align*}
				u^\sharp(t,x)=\varepsilon u_{0}(x) *_{(x)}  E_{0}(t, x)+\varepsilon u_{1}(x) *_{(x)}  E_{1}(t, x)
			\end{align*}
			and 
			\begin{align*}
				I[u](t,x):=\int\limits_0^t |u(s,x)|^p*_x E_1(t-s, x)ds.
			\end{align*} 
			
			Now, for the part $u^\sharp$,   Theorem \ref{thm11},   immediately  implies that
			\begin{align}\label{f3}
				\|u^\sharp\|_{X(T)}\lesssim\varepsilon\|(u_0,u_1)\|_{{H}_{\mathcal L}^\alpha (G)\times L^2(G)}.
			\end{align}
			On the other hand, for the part $I[u]$, using Minkowski's integral inequality, Young's convolution inequality, Theorem \ref{thm11}, and  by time translation invariance property of the Cauchy problem (\ref{eq0010}), we get  
			\begin{align}\label{f}\nonumber
				\|\partial_t^j(-\mathcal L)^{i\alpha/2}I[u]\|_{L^2(G)}&=\left(\int_{G}	\big |\partial_t^j(-\mathcal L)^{i\alpha/2} \int\limits_0^t |u(s,x)|^p*_x E_1(t-s, x)ds\big |^2 dg\right)^{\frac{1}{2}}\\\nonumber
				&=\left(\int_{G}\big	|  \int\limits_0^t |u(s,x)|^p*_x \partial_t^j(-\mathcal L)^{i\alpha/2}E_1(t-s, x)ds\big|^2 dg\right)^{\frac{1}{2}}	\\\nonumber
				&\lesssim  \int\limits_0^t \| |u(s,\cdot )|^p*_x \partial_t^j(-\mathcal L)^{i\alpha/2}E_1(t-s, \cdot)\|_{L^2(G)}ds\\\nonumber
				&\lesssim  \int\limits_0^t \| u(s,\cdot)^p\|_{L^2(G)} \|\partial_t^j(-\mathcal L)^{i\alpha/2}E_1(t-s, \cdot)\|_{L^2(G)}ds\\\nonumber
				&\lesssim \int\limits_0^t (t-s)^{1-(j+i)}\|u(s,\cdot)\|^p_{L^{2p}(G)}ds\\\nonumber
				&\lesssim\int\limits_0^t (t-s)^{1-(j+i)} \|u(s,\cdot)\|^{p\theta(n,2p, \alpha)}_{H^\alpha_\mathcal L(G)}\|u(s,\cdot)\|^{p(1-\theta(n,2p,\alpha ))}_{L^2(G)}ds \\\nonumber
				&\lesssim\int\limits_0^t (t-s)^{1-(j+i)} a(s)^p  \|u\|^p_{X(s)}ds \\ 	 	&\lesssim t^{2-(j+i)} a(t)^p   \|u\|^p_{X(t)},
			\end{align} 
			for $i,j\in\{0,1\}$ such that $0\leq i+j\leq 1.$   Again for $i,j\in\{0,1\}$ such that $0\leq i+j\leq 1,$ a similar calculations as in  (\ref{f}) together with H\"older's inequality  and    (\ref{eq34}),  we get 
			\begin{align}\label{f5}\nonumber
				& 	\|\partial_t^j(-\mathcal L)^{i\alpha/2}\left(I[u]-I[v]\right)\|_{L^2(G)}\\\nonumber&\lesssim \int\limits_0^t (t-s)^{1-(j+i)}\||u(s,\cdot)|^p-|v(s,\cdot)|^p\|_{L^{2}(G)}ds\\\nonumber
				&\lesssim\int\limits_0^t (t-s)^{1-(j+i)} \|u(s,\cdot)-v(s,\cdot)\|_{L^{2p}(G)}\left(\|u(s,\cdot)\|^{p-1}_{L^{2p}(G)}+\|v(s,\cdot)\|^{p-1}_{L^{2p}(G)}\right)ds\\ &\lesssim t^{2-(j+i)} a(t)^p \|u-v\|_{X(t)}\left(\|u\|^{p-1}_{X(t)}-\|v\|^{p-1}_{X(t)}\right).
			\end{align} 
			Thus 	combining  (\ref{f3}),   (\ref{f}), and (\ref{f5}), we have
			\begin{align}\label{1}
				\|N u\|_{X(t)} \leq D \varepsilon\left\|\left(u_{0}, u_{1}\right)\right\|_{H_{\mathcal{L}}^{\alpha }(G) \times L^{2}(G)}+D(1+t)^{b(p)}\|u\|_{X(t)}^{p} 
			\end{align} 
			and 
			\begin{align}\label{2} \|Nu-Nv\|_{X(T)}\leq   D(1+t)^{b(p)}\|u-v\|_{X(t)}\left(\|u\|^{p-1}_{X(T)}-\|v\|^{p-1}_{X(T)}\right),\end{align} 
			where
			$$
			b(p) =\left\{\begin{array}{ll}
				p+1 & \text { if } u_{1} \neq 0 ,\\
				2 & \text { if } u_{1}=0.
			\end{array}\right.
			$$ 
			Let  us consider $L_{0}=\left\|u_{0}\right\|_{H_{\mathcal{L}}^{\alpha }(G)}+\|u_1\|_{L^2(G)}$. Then       for any $L \geq 2 D L_{0}$ and any $t \leq C \varepsilon^{-\frac{p-1}{b(p)}}$ with $C \doteq(4 D L)^{-\frac{1}{b(p)}}$, from (\ref{1}) and (\ref{2}),  we get 
			$$
			\|N u\|_{X(t)} \leq  L \varepsilon, \quad\|N u-N v\|_{X(t)} \leq \frac{1}{2}\|u-v\|_{X(t)},
			$$
			for all $u, v \in {B}(0, L \varepsilon) \doteq\left\{u \in X(t):\|u\|_{X(t)} \leq  L \varepsilon\right\}$.  
			This shows that the map $N$ turns out to be a contraction in  the ball $ {B}(0, L \varepsilon)$ in the Banach space $X(T).$   Therefore, the Banach's fixed point theorem gives us the uniquely determined fixed point $u$ for the map $N$ which is   our mild solution to the system (\ref{eq0010}) on $[0, t] \subset[0, T(\varepsilon)]$.    Moreover,  from the above discussions,  we  have  the following lower bound estimates $$T(\varepsilon) \geq C \varepsilon^{-\frac{p-1}{b(p)}}.$$ This   completes  the proof of Theorem \ref{thm22}.
		\end{proof}

		From the above local existence result, we have the following remarks.
		\begin{rmk}
			We note that in the statement of Theorem \ref{thm22}, the  restriction on the upper bound for the exponent $p$ which is $p\leq\frac{n}{n-2\alpha }$   is necessary in order to apply Gagliardo-Nirenberg type inequality (\ref{eq34}) in  (\ref{f}). Also   the other restriction $n \geq 2[\alpha]+2$ is made to fulfill the assumptions for the employment of such inequality.
		\end{rmk}		
		
		
		\begin{rmk}
			For the   Euclidean or in the Heisenberg group, an additional $L^{1 }$-regularity for the Cauchy data is required to get a global existence result for a non-empty range for $p$.   Using this trick, one can improve the $L^{2}$-decay estimates for the solution and its first-order derivatives to the linear homogeneous problem.   Since $L^{2}(G) \subset L^{1}(G)$ in the case of  compact group,   no additional decay rate can be gained for the $L^{2}(G)$-norm of the solution  even working with $L^{1}(G)$-regularity for $u_{0}, u_{1}$.
			
		\end{rmk}

		\section{Blow-up result}\label{sec5}
		In this section,  we      prove Theorem \ref{f6}   using a comparison argument for ordinary differential inequality of   second order. First we recall   the following improved Kato’s lemma
		for  ordinary differential inequality  with upper bound estimate for the lifespan.
		\begin{lem}\cite{kato}\label{kato}
			Let $p > 1, a > 0, q > 0$ satisfying
			$$M :=\frac{p-1}{2}a-\frac{q}{2}+1>0
			.$$
			Assume that $F \in C^2 	([0, T))$  satisfy
			\begin{align}
				F(t) \geq A t^{a} & \text { for } t \geq T_{0}, \\
				F^{\prime \prime}(t) \geq B(t+R)^{-q}|F(t)|^{p} & \text { for } t \geq 0, \\\label{kato1}
				F(0) \geq 0, F^{\prime}(0)>0, &
			\end{align}
			where $A, B, R, T_{0}$ are positive constants. Then, there exists a positive constant $C_{0}=C_{0}(p, a, q, B)$ such that
			$$
			T<2^{\frac{2}{M}} T_{1}
			$$
			holds provided
			$$
			T_{1}:=\max \left\{T_{0}, \frac{F(0)}{F^{\prime}(0)}, R\right\} \geq C_{0} A^{-\frac{(p-1) }{2 M}}.
			$$
		\end{lem}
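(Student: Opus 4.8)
The plan is to run a bootstrap (iteration) argument on the differential inequality, progressively improving the polynomial lower bound for $F$ until the resulting bounds force $F$ to become infinite at a finite time no larger than $2^{2/M}T_1$. The mechanism is entirely classical (double integration of $F'' \geq B(t+R)^{-q}F^p$ after substituting the current lower bound), and the hypothesis $M>0$ is precisely what makes the iterated exponents diverge.

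First I would record the qualitative structure. Since the right-hand side $B(t+R)^{-q}|F(t)|^p$ is nonnegative, $F''\geq 0$, so $F$ is convex; together with $F'(0)>0$ this gives $F'(t)\geq F'(0)>0$, hence $F$ is strictly increasing and $F(t)>0$ for $t>0$. On $[T_1,\infty)$, where $T_1\geq\max\{T_0,F(0)/F'(0),R\}$, the base bound $F(t)\geq A t^{a}$ is available, and $t\geq R$ lets me replace the weight $(t+R)^{-q}$ by a comparable $t^{-q}$ up to a fixed dyadic factor, which is where factors of $2$ ultimately originate. This provides the starting bound $F(t)\geq C_0(t-\sigma_0)^{\alpha_0}$ with $\alpha_0=a$, $C_0\sim A$.

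The core is then the iteration. Assuming $F(t)\geq C_j(t-\sigma_j)^{\alpha_j}$ for $t\geq\sigma_j$ along a slicing sequence $\sigma_j\nearrow 2T_1$ (for instance $\sigma_j=(2-2^{1-j})T_1$), I substitute into $F''(t)\geq B(t+R)^{-q}F(t)^p$ and integrate twice from $\sigma_{j+1}$. This yields a new bound $F(t)\geq C_{j+1}(t-\sigma_{j+1})^{\alpha_{j+1}}$ with the exponent recursion $\alpha_{j+1}=p\alpha_j-q+2$ and a constant recursion of the schematic shape $C_{j+1}=\dfrac{\tilde B\,C_j^{\,p}}{\alpha_{j+1}(\alpha_{j+1}-1)}$, where $\tilde B$ absorbs $B$ and the fixed weight/dyadic factors. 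Solving the exponent recursion gives $\alpha_j=\alpha^{*}+p^{j}(a-\alpha^{*})$ with $\alpha^{*}=\frac{q-2}{p-1}$, and a direct computation shows $a-\alpha^{*}=\frac{2M}{p-1}>0$ is equivalent to $M>0$, so $\alpha_j\to\infty$ like $p^{j}$. Tracking the constants logarithmically, $\log C_{j+1}=p\log C_j-2\log\alpha_{j+1}+O(1)$, so $p^{-j}\log C_j$ converges to a finite limit $K_\infty=\log A+(\text{const depending on }B,p,q,a)$, the $\sum_k p^{-k}\log\alpha_k$ contributions being summable because $\log\alpha_k$ grows only linearly in $k$.

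Finally I fix a time $\tau$ and evaluate: $\log\big(C_j(\tau-2T_1)^{\alpha_j}\big)\sim p^{j}\big(K_\infty+(a-\alpha^{*})\log(\tau-2T_1)\big)$, which diverges to $+\infty$ exactly when $\tau-2T_1$ exceeds the threshold $\exp\!\big(-K_\infty(p-1)/(2M)\big)$, the exponent $(p-1)/(2M)$ coming from $a-\alpha^{*}=2M/(p-1)$. If such a $\tau$ were strictly less than $T$, the lower bound would force $F(\tau)=+\infty$, contradicting $F\in C^2([0,T))$; hence $T$ cannot exceed this threshold. Since $K_\infty$ carries the factor $\log A$, the threshold scales like $A^{-(p-1)/(2M)}$ times explicit constants, and the assumption $T_1\geq C_0 A^{-(p-1)/(2M)}$ bounds the $A$-dependent part by a multiple of $T_1$, producing the stated bound $T<2^{2/M}T_1$. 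The main obstacle, as always in Kato-type lemmas, is not the qualitative iteration but the \emph{quantitative bookkeeping of the constants} $C_j$: verifying convergence of $p^{-j}\log C_j$ is routine, but extracting the sharp numerical constant $2^{2/M}$ and the precise exponent $-\tfrac{p-1}{2M}$ on $A$ requires choosing the slicing points $\sigma_j$ so that the accumulated shift is exactly $2T_1$ and the geometric factors align with the $1/M$ normalization; it is this sharpness, rather than mere finiteness, that constitutes the technical heart of the argument.
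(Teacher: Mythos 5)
The paper does not actually prove this lemma: it is quoted from Takamura \cite{kato}, and the proof there is a direct two-step integration rather than an iteration. One multiplies $F''(t)\geq B(t+R)^{-q}F(t)^p$ by $F'(t)>0$ and integrates once from (roughly) $T_1$, using $(t+R)^{-q}\geq (2t)^{-q}$ for $t\geq R$ and the hypothesis $T_1\geq F(0)/F'(0)$ to absorb the term $F(T_1)^{p+1}$ into $F(t)^{p+1}$; this produces a first-order inequality of the form $F'(t)\geq c\,t^{-q/2}F(t)^{(p+1)/2}$, which is then separated and integrated a second time against the lower bound $F(t)\geq At^{a}$. The constant $2^{2/M}$ and the threshold $C_0A^{-\frac{p-1}{2M}}$ come out of that second integration. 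Your proposal instead runs the classical slicing iteration, which is the route to the \emph{original}, qualitative Kato lemma; as set up it does not deliver the quantitative conclusion asserted here.

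The concrete gap is in the slicing. With $\sigma_j=(2-2^{1-j})T_1\nearrow 2T_1$, the strongest statement your scheme can produce is that $F$ blows up before $2T_1+\tau_*$ with $\tau_*\lesssim A^{-\frac{p-1}{2M}}\lesssim T_1$, i.e.\ a bound of the form $T\leq (2+c)T_1$. But $2^{2/M}<2$ whenever $M>2$ (take $p=3$, $a=4$, $q=2$, so $M=4$ and $2^{2/M}=\sqrt{2}$), so the asserted bound $T<2^{2/M}T_1$ is strictly stronger than anything reachable once you have conceded an accumulated shift of $2T_1$. Repairing this forces the total slicing length down to an $M$-dependent fraction of $T_1$, which degrades the constants $C_j$ at every step; the ``routine bookkeeping'' you defer is therefore exactly where the content of the lemma sits, and it is not clear the iteration closes with the stated constants. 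Two further symptoms that the argument is not engaging with the actual mechanism: the hypothesis $T_1\geq F(0)/F'(0)$ never enters your proof, and your argument would be insensitive to the distinction between $F'(0)>0$ here and the companion hypothesis $F'(0)=0$, $F(t_0)\geq 2F(0)$ of Lemma \ref{kato2}, whereas the cited proofs must (and do) use these data in the first integration step.
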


		\begin{lem}\label{kato2}
			Assume that (\ref{kato1}) is replaced by
			$$
			F(0)>0, F^{\prime}(0)=0
			$$
			and additionally that there is a time $t_{0}>0$ such that
			$$
			F\left(t_{0}\right) \geq 2 F(0) .
			$$
			Then, the conclusion of Lemma \ref{kato}  is changed to that there exists a positive constant $C_{0}=C_{0}(p, a, q, B)$ such that
			$$
			T<2^{\frac{2}{M}} T_{2}
			$$
			holds provided
			$$
			T_{2}:=\max \left\{T_{0}, t_{0}, R\right\} \geq C_{0} A^{-\frac{(p-1) }{2 M}}.
			$$
		\end{lem}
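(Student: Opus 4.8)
The plan is to deduce Lemma \ref{kato2} from the already-available Lemma \ref{kato} by re-running that argument starting from the doubling time $t_0$, so that $t_0$ takes over the role played by the quotient $F(0)/F'(0)$, which is now ill-defined because $F'(0)=0$. The doubling hypothesis $F(t_0)\geq 2F(0)$ is precisely the ingredient that supplies a workable replacement for that quotient.

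First I would record two structural facts that need no extra hypotheses. Since the right-hand side $B(t+R)^{-q}|F(t)|^{p}$ is nonnegative, the inequality forces $F''(t)\geq 0$ on $[0,T)$; hence $F$ is convex and $F'$ is nondecreasing. Combined with $F'(0)=0$ this gives $F'(t)\geq 0$ for all $t$, so $F$ is nondecreasing and $F(t)\geq F(0)>0$ throughout $[0,T)$. In particular $|F(t)|^{p}=F(t)^{p}$, so the differential inequality is genuinely of the type treated in Lemma \ref{kato}, and positivity is available wherever it is needed.

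The heart of the matter is to extract a quantitative positive lower bound on $F'(t_0)$. By convexity the slope at $t_0$ dominates the secant slope over $[0,t_0]$, so
\[
F'(t_0)\ \geq\ \frac{F(t_0)-F(0)}{t_0}\ \geq\ \frac{F(t_0)}{2t_0}\ >\ 0,
\]
where the last step uses $F(0)\leq \tfrac12 F(t_0)$, i.e. exactly the doubling assumption. This not only yields $F'(t_0)>0$ but controls the quotient $F(t_0)/F'(t_0)\leq 2t_0$, which is the quantity that will replace $F(0)/F'(0)$ once we restart at $t_0$. Having secured this, I would shift time by setting $G(t):=F(t+t_0)$ on $[0,T-t_0)$: then $G(0)=F(t_0)>0$, $G'(0)=F'(t_0)>0$, and $G$ satisfies $G''(t)\geq B(t+\tilde R)^{-q}G(t)^{p}$ with $\tilde R:=t_0+R$, together with the growth bound $G(t)\geq A(t+t_0)^a\geq At^a$ for $t\geq\tilde T_0:=\max\{T_0-t_0,0\}$. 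All hypotheses of Lemma \ref{kato} are thus met for $G$, with the shape-constants $p,a,q,B$ unchanged.

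Applying Lemma \ref{kato} to $G$ gives a blow-up bound in terms of $\tilde T_1=\max\{\tilde T_0,\,G(0)/G'(0),\,\tilde R\}$, and the estimates $\tilde T_0\leq T_0$, $G(0)/G'(0)\leq 2t_0$ and $\tilde R\leq 2\max\{t_0,R\}$ show that each entry is dominated by a fixed multiple of $T_2=\max\{T_0,t_0,R\}$; since $C_0$ may be taken to depend only on $p,a,q,B$, this yields $T\leq C\,T_2$ with the correct exponent $-(p-1)/(2M)$ in the threshold. The main obstacle is the final, purely quantitative bookkeeping: a black-box shift degrades the constant (one gets roughly $T<t_0+2^{2/M}\tilde T_1$ rather than $T<2^{2/M}T_2$), so to reproduce the stated factor $2^{2/M}$ verbatim one should instead retrace the proof of Lemma \ref{kato} and check that the hypothesis $F'(0)>0$ enters only through (i) the monotone growth of $F$ and (ii) the threshold time $F(0)/F'(0)$ at which the tangent-line lower bound balances — both of which are furnished here, from $t_0$ onward, by convexity together with the bound $F(t_0)/F'(t_0)\leq 2t_0$ derived above.
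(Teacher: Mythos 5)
The paper does not actually prove this lemma: both Lemma \ref{kato} and Lemma \ref{kato2} are imported verbatim from Takamura's paper \cite{kato}, so there is no in-paper argument to compare against. Your reduction is sound and is essentially the standard way the second lemma is deduced from the first: convexity (from $F''\geq 0$) gives $F'\geq 0$ and $F\geq F(0)>0$, the secant estimate $F'(t_0)\geq \bigl(F(t_0)-F(0)\bigr)/t_0\geq F(t_0)/(2t_0)$ converts the doubling hypothesis into the quantitative control $F(t_0)/F'(t_0)\leq 2t_0$ that replaces $F(0)/F'(0)$, and the shift $G(t)=F(t+t_0)$ satisfies all hypotheses of Lemma \ref{kato} with $\tilde R=t_0+R$. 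Two small bookkeeping points, both of which you essentially flag yourself: (i) the black-box application yields $T<t_0+2^{2/M}\tilde T_1\lesssim T_2$ rather than the literal constant $2^{2/M}$, and recovering the stated constant requires retracing the proof of Lemma \ref{kato} with $t_0$ in place of $F(0)/F'(0)$, as you say; (ii) to verify the threshold hypothesis you should note explicitly that $\tilde T_1\geq\max\{T_0-t_0,\,t_0+R\}\geq \tfrac12\max\{T_0,t_0,R\}=\tfrac12 T_2$, so the assumption $T_2\geq C_0A^{-(p-1)/(2M)}$ (with $C_0$ suitably enlarged) does guarantee $\tilde T_1\geq C_0'A^{-(p-1)/(2M)}$. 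Neither point affects the application in Section \ref{sec5}, where only the order of the lifespan bound matters.
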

		Now we are ready to prove our main result of this section using Lemma \ref{kato} and Lemma \ref{kato2}. 
		\begin{proof}[Proof of Theorem \ref{f6}]
			According to Definition \ref{eq332},  let $u$ be a local in-time energy solution to (\ref{eq0010})  with lifespan $T$. Let    $t \in(0, T)$ be fixed.  Suppose that $\phi \in \mathcal{C}_{0}^{\infty}([0, T) \times G),$  is a cut-off function such that $\phi=1$ on $[0, t] \times G$ in (\ref{eq011}). Then 
			\begin{align}\label{f7}
				\int_{G} \partial_{t} u(t, x) \;dx-\varepsilon \int_{G} u_{1}(x) \;dx=\int_{0}^{t} \int_{\mathrm{G}}|u(s, x)|^{p}  {~d} x {~d} s
			\end{align}
			Let us introduce the time-dependent functional
			$$
			U_{0}(t) \doteq \int_{G} u(t, x) \;dx. 
			$$
			Then the equality (\ref{f7})    can be rewritten in the following way:
			$$
			U_{0}^{\prime}(t)-U_{0}^{\prime}(0)=\int_{0}^{t} \int_{G}|u(s, x)|^{p} \;dx \;ds .
			$$
			We also remark that,  from the assumptions on the initial data, we obtain 
			$$
			U_{0}(0)=\varepsilon \int_{G} u_{0}(x) \;dx \geq 0 \quad \text { and } \quad U_{0}^{\prime}(0)=\varepsilon \int_{G} u_{1}(x) \;dx \geq 0.
			$$
			
			Then, $U_{0}^{\prime}$ is differentiable with respect to $t$, and by  Jensen's inequality, we have 
			$$
			U_{0}^{\prime \prime}(t)=\int_{\mathrm{G}}|u(t, x)|^{p} \;dx \geq\left|U_{0}(t)\right|^{p}.
			$$
			Now applying  Lemmas \ref{kato} and \ref{kato2} to the functional $U_{0}$ we conclude the proof of Theorem \ref{f6}.

		\end{proof}

			%
			%
		%
		%
		
		\section{Fractional Klein-Gordon equation with regular mass}\label{sec6}
		In this section, we consider the fractional  Klein-Gordon equation on the  compact Lie group $G$  with regular mass term depending on the space variable. More preciously,   for $T>0$ and for $0<\alpha<1$, we consider the   Cauchy problem (\ref{eq1111111}), namely, 
		\begin{align*} 
			\begin{cases}
				\partial^2_tu(t, x)+\left( -\mathcal{L}\right)^\alpha u(t, x)+m(x)u(t, x)=0, & (x, t)\in[0, T]\times G,\\
				u(0,x)=u_0(x),  & x\in G,\\ \partial_tu(x,0)=u_1(x), & x\in G,
			\end{cases}
		\end{align*}
		where $u_{0}(x)$ and $u_{1}(x)$ are two given functions on $G$.	Here, the mass function $m$ is supposed to be non-negative and  regular function. Note that when the mass function $m$ is identically zero, then the above system becomes usual fractional wave equation       defined in (\ref{eq0010}).  Now we prove main result of this section.  
		
		\begin{proof}[Proof of Theorem \ref{thm11111}]
			By	multiplying equation (\ref{eq1111111}) by $u_{t}$ and integrating over $G$ to get
			\begin{align}\label{eq222}
				\nonumber	\operatorname{Re}\Big(\left\langle u_{t t}(t, \cdot), u_{t}(t, \cdot)\right\rangle_{L^{2}(G)}&+\left\langle\left( -\mathcal{L}\right)^\alpha u(t, \cdot), u_{t}(t, \cdot)\right\rangle_{L^{2}(G)}\\&\quad\qquad\qquad+\left\langle m(\cdot) u(t, \cdot), u_{t}(t, \cdot)\right\rangle_{L^{2}(G)}\Big)=0, 
			\end{align}
			for all $t \in[0, T]$. 
			
			Now it is easy to check that
			$$
			\begin{array}{l}
				\operatorname{Re}\left(\left\langle u_{t t}(t, \cdot), u_{t}(t, \cdot)\right\rangle_{L^{2}(G)}\right)=\frac{1}{2} \partial_{t}\left\langle u_{t}(t, \cdot), u_{t}(t, \cdot)\right\rangle_{L^{2}(G)} \\
				\operatorname{Re}\left(\left\langle\left( -\mathcal{L}\right)^\alpha u(t, \cdot), u_{t}(t, \cdot)\right\rangle_{L^{2}(G)}\right)=\frac{1}{2} \partial_{t}\left\langle\left( -\mathcal{L}\right)^{\frac{\alpha}{2}} u(t, \cdot), \left( -\mathcal{L}\right)^{\frac{\alpha}{2}} u(t, \cdot)\right\rangle_{L^{2}(G)},
			\end{array}
			$$
			and
			$$
			\operatorname{Re}\left(\left\langle m(\cdot) u(t, \cdot), u_{t}(t, \cdot)\right\rangle_{L^{2}(G)}\right)=\frac{1}{2} \partial_{t}\langle\sqrt{m}(\cdot) u(t, \cdot), \sqrt{m}(\cdot), u(t, \cdot)\rangle_{L^{2}(G)} .
			$$
			Let us denote  
			$$
			E(t):=\left\|u_{t}(t, \cdot)\right\|_{L^{2}(G)}^{2}+\left\|\left( -\mathcal{L}\right)^{\frac{\alpha}{2}} u(t, \cdot)\right\|_{L^{2}(G)}^{2}+\|\sqrt{m}(\cdot) u(t, \cdot)\|_{L^{2}(G)}^{2},
			$$
			as 	the energy functional of the system (\ref{eq1111111}). Then   equation (\ref{eq222}) implies that $\partial_{t} E(t)=0$, and consequently  we have  $E(t)=E(0)$, for all $t \in[0, T]$. Since $
			\left\|\left( -\mathcal{L}\right)^{\frac{\alpha}{2}} u_{0}\right\|_{L^{2}(G)},  \left\|u_{0}\right\|_{L^{2}(G)} \leq\left\|u_{0}\right\|_{H_{\mathcal{L}}^\alpha(G)},
			$ by taking into  consideration the fact that 
			$$
			\left\|\sqrt{m}(\cdot) u_{0}(\cdot )\right\|_{L^{2}(G)}^{2} \leq\|m\|_{L^{\infty}(G)}\left\|u_{0}\right\|_{L^{2}(G)}^{2},
			$$
			each positive term  of  $E(t)$  is bounded by  itself, i.e., 	\begin{align}\label{eq006}\nonumber
				\left\|u_{t}(t, \cdot)\right\|_{L^{2}(G)}^{2}, \left\|\left( -\mathcal{L}\right)^{\frac{\alpha}{2}} u(t, \cdot)\right\|_{L^{2}(G)}^{2} & \lesssim\left\|u_{1}\right\|_{L^{2}(G)}^{2}+\left\|u_{0}\right\|_{H_{\mathcal{L}}^\alpha(G)}^{2}+\|m\|_{L^{\infty}(G)}\left\|u_{0}\right\|_{L^{2}(G)}^{2} \\
				& \lesssim \left(1+\|m\|_{L^{\infty}(G)}\right)\left\{ \left\|u_{0}\right\|_{H^{\alpha}(G)}^{2}+\left\|u_{1}\right\|_{L^{2}(G)}^{2}\right\},
			\end{align}
			
			and 
			\begin{align}\label{eq003}
				\|\sqrt{m}(\cdot) u(t, \cdot)\|_{L^{2}(G)}^{2} \lesssim\left(1+\|m\|_{L^{\infty}(G)}\right)\left\{ \left\|u_{0}\right\|_{H^{\alpha}(G)}^{2}+\left\|u_{1}\right\|_{L^{2}(G)}^{2}\right\},
			\end{align}
			uniformly for  $t \in[0, T]$.  Thus the desired estimates for  $	\left\|u_{t}(t, \cdot)\right\|_{L^{2}(G)}^{2}, \left\|\left( -\mathcal{L}\right)^{\frac{\alpha}{2}} u(t, \cdot)\right\|_{L^{2}(G)}^{2} $ are  proved.
			
			In order to prove  (\ref{eq2}), it remains to  calculate the estimate for the norm  $\|u(t, \cdot)\|_{L^{2}(G)}$. Let $\widehat{u}(t, \xi)=(\widehat{u}(t, \xi)_{kl})_{1\leq k, l\leq d_\xi}\in \mathbb{C}^{d_\xi\times d_\xi}, [\xi]\in\widehat{ G}$ denote the group Fourier transform of $u$  with respect to the $x $ variable.   Invoking group Fourier transform with respect to $x$ on   (\ref{eq1111111}), we get the following  Cauchy problem for the system of ODEs (with size of the system that depends on the representation $\xi$)
			\begin{align}\label{eq666}
				\begin{cases}
					\partial^2_t\widehat{u}(t,\xi)+(-	\sigma_{\mathcal{L}}(\xi))^\alpha \widehat{u}(t,\xi) =\widehat{f}(t,\xi),& [\xi]\in\widehat{ G},~t>0,\\ \widehat{u}(0,\xi)=\widehat{u}_0(\xi), &[\xi]\in\widehat{ G}\\ \partial_t\widehat{u}(0,\xi)=\widehat{u}_1(\xi), &[\xi]\in\widehat{ G},
				\end{cases} 
			\end{align}
			where  $\sigma_{\mathcal{L}}$	is the symbol of the  of the Laplace-Beltrami operator operator $\mathcal{L}$ and 
			$\widehat{f}(t, \xi)$ denotes the group Fourier transform of the function $f(t, x)=-m(x) u(t, x)$. Using the identity (\ref{symbol}),  the   system  (\ref{eq666}) is decoupled in $d_\xi^2$ independent   ODEs, namely,
			\begin{align}\label{eq7}
				\begin{cases}
					\partial^2_t\widehat{u}(t,\xi)_{kl}+ \lambda_\xi^{2\alpha }  \widehat{u}(t,\xi)_{kl}= \widehat{f}(t,\xi)_{kl},& [\xi]\in\widehat{ G},~t>0,\\ \widehat{u}(0,\xi)_{kl}=\widehat{u}_0(\xi)_{kl}, &[\xi]\in\widehat{ G}\\ \partial_t\widehat{u}(0,\xi)_{kl}=\widehat{u}_1(\xi)_{kl}, &[\xi]\in\widehat{ G},
				\end{cases}
			\end{align}
			for all $k,l\in\{1,2,\ldots,d_\xi\}.$ 
			
			Then    the characteristic equation of (\ref{eq7}) is given by
			\[\lambda^2+\lambda_\xi^{2\alpha } =0,\]
			and consequently the characteristic roots are   $\lambda=\pm i \lambda_\xi^\alpha  $. 
			Thus the solution of the   homogeneous  equation of (\ref{eq7}) is given by 
			
				\begin{align} 
				&\widehat{u}(t,\xi)_{kl}=\begin{cases}
				\cos \left(t \lambda_\xi^\alpha \right) \widehat{u}_0(\xi)_{kl}+\frac{\sin \left(t \lambda_\xi^\alpha \right)}{\lambda_\xi^\alpha } \widehat{u}_1(\xi)_{kl}& \text{if }\lambda_\xi^\alpha\neq 0,\\
						  \widehat{u}_0(\xi)_{kl}+t \widehat{u}_1(\xi)_{kl}& \text{if }\lambda_\xi^\alpha= 0.
				\end{cases}  
			\end{align}
			For $\lambda_\xi^\alpha\neq 0$, applying Duhamel's principle,  the solution of (\ref{eq7}) is given by 
			\begin{align}\label{eq001}
				\widehat{u}(t,\xi)_{kl}=\cos \left(t \lambda_\xi^\alpha \right) \widehat{u}_0(\xi)_{kl}+\frac{\sin \left(t \lambda_\xi^\alpha \right)}{\lambda_\xi^\alpha } \widehat{u}_1(\xi)_{kl}+\int_{0}^{t} \frac{\sin \left((t-s) \lambda_\xi^\alpha\right)}{\lambda_\xi^\alpha} \widehat{f}(s,\xi)_{kl} \;{d} s .
			\end{align}
			Without loss of generality, we assume  that $T \geq 1$, and we recall  the following estimates
			$$
			\left|\cos \left(t \lambda_\xi^\alpha\right)\right| \leq 1, \quad \forall t \in[0, T],
			$$
			and
			$$
			\left|\sin \left(t \lambda_\xi^\alpha\right)\right| \leq 1,
			$$
			for large values of the quantities $t \lambda_\xi^\alpha$, while for small values of them, one have 
			$$
			\left|\sin \left(t \lambda_\xi^\alpha\right)\right| \leq t \lambda_\xi^\alpha \leq T \lambda_\xi^\alpha.
			$$
			Using the Cauchy-Schwarz inequality, the equation  (\ref{eq001}) yields
			\begin{align}\label{neqeq}\nonumber
				|	\widehat{u}(t,\xi)_{kl}|&\leq| \widehat{u}_0(\xi)_{kl}| +T |\widehat{u}_1(\xi)_{kl}|+ \int_{0}^{t} |t-s|| \widehat{f}(s,\xi)_{kl} |\;{d} s \\
				&\leq| \widehat{u}_0(\xi)_{kl}| +T |\widehat{u}_1(\xi)_{kl}|+\|t-s\|_{L^{2}[0, T]} \|\widehat{f}(\cdot ,\xi)_{kl}\|_{L^{2}[0, T]}. 	\end{align}
 For $\lambda_\xi^\alpha= 0$, proceeding  likewise as above, we get  the same  estimate (\ref{neqeq}) for $	|	\widehat{u}(t,\xi)_{kl}|$.   
			Now  from the last estimate (\ref{neqeq}),  by substituting back our initial functions in $t$, gives
			$$
			\left|\widehat{u}(t, \xi)_{k l}\right|^{2} \lesssim\left|\widehat{u}_{0}(\xi)_{k l}\right|^{2}+\left|\widehat{u}_{1}(\xi)_{kl}\right|^{2}+|| \widehat{f}(\cdot , \xi)_{k l} \|_{L^{2}[0, T]}^{2},
			$$
			where the latter holds uniformly in $\xi \in \widehat{G}$ and for each $k,l\in\{1,2,\ldots,d_\xi\}.$   Thus summing the above over $k, l$,  we get
			$$
			\sum_{k, l=1}^{d_{\xi}}	\left|\widehat{u}(t, \xi)_{k l}\right|^{2} \lesssim \sum_{k, l=1}^{d_{\xi}}	\left|\widehat{u}_{0}(\xi)_{k l}\right|^{2}+\sum_{k, l=1}^{d_{\xi}}	\left|\widehat{u}_{1}(\xi)_{kl}\right|^{2}+\sum_{k, l=1}^{d_{\xi}}	\int_{0}^{T} | \widehat{f}(t, \xi)_{k l} |^{2}\;dt.
			$$
			This implies that

			$$
			\left\|\widehat{u}(t, \xi)_{k l}\right\|_{\mathrm{HS}}^{2} \lesssim\left\|\widehat{u}_{0}(\xi)_{k l}\right\|_{\mathrm{HS}}^{2}+\left\|\widehat{u}_{1}(\xi)_{k l}\right\|_{\mathrm{HS}}^{2}+ 	\int_{0}^{T} \sum_{k, l=1}^{d_{\xi}}| \widehat{f}(t, \xi)_{k l} |^{2}\;dt
			$$
			and consequently  
			$$
			\sum_{[\xi] \in \widehat{G}} d_{\xi}	\left\|\widehat{u}(t, \xi)_{k l}\right\|_{\mathrm{HS}}^{2} \lesssim 	\sum_{[\xi] \in \widehat{G}} d_{\xi}\left\|\widehat{u}_{0}(\xi)_{k l}\right\|_{\mathrm{HS}}^{2}+	\sum_{[\xi] \in \widehat{G}} d_{\xi}\left\|\widehat{u}_{1}(\xi)_{k l}\right\|_{\mathrm{HS}}^{2}+ 		\sum_{[\xi] \in \widehat{G}} d_{\xi}\int_{0}^{T}  \|\widehat{f}(t, \xi)\|_{\operatorname{HS}}^{2}\;dt.
			$$
			Using    dominated convergence theorem, Fubini's theorem, and  the Plancherel identity (\ref{eq002}), we get 
			\begin{align}\label{eq005}
				\|u(t, \cdot)\|_{L^{2}(G)}^{2} \lesssim\left\|u_{0}\right\|_{L^{2}(G)}^{2}+\left\|u_{1}\right\|_{L^{2}(G)}^{2}+\int_{0}^{T}\|f(t, \cdot)\|_{L^{2}(G)}^{2}\;dt .
			\end{align}
			Now by (\ref{eq003}), we have
			$$
			\begin{aligned}
				\|f(t, \cdot)\|_{L^{2}(G)}^{2} &=\|m(\cdot) u(t, \cdot)\|_{L^{2}(G)}^{2} \\
				& \leq\|m\|_{L^{\infty}(G)}\|\sqrt{m}(\cdot) u(t, \cdot)\|_{L^{2}(G)}^{2} \\
				& \lesssim\left(1+\|m\|_{L^{\infty}(G)}\right)^{2}\left\{\left\|u_{0}\right\|_{H^{\alpha}(G)}^{2}+\left\|u_{1}\right\|_{L^{2}(G)}^{2}\right\} .
			\end{aligned}
			$$
			Thus from (\ref{eq005}), we get 
			\begin{align}\label{eq004}
				\|u(t, \cdot)\|_{L^{2}(G)}^{2} \lesssim\left(1+\|m\|_{L^{\infty}(G)}\right)^{2}\left\{\left\|u_{1}\right\|_{L^{2}(G)}^{2}+\left\|u_{0}\right\|_{H^{\alpha}(G)}^{2}\right\},
			\end{align}
			uniformly in $t \in[0, T]$. Hence the required inequality    (\ref{eq2})   follows from   (\ref{eq006}) and  (\ref{eq004}). Moreover, the uniqueness of $u$ is an immediate consequence of the inequality (\ref{eq2}) and this completes the  proof of the  theorem.
		\end{proof}

		\section{Final remarks}\label{sec7}
		From  Remark (\ref{remark})  one can see that  the sharp lifespan estimates  	for local in-time solutions to (\ref{eq0010}) is    indepenedent of $\alpha$.  Thus,  for any $0<\alpha<1$, the lifespan estimates  for solutions  to the Cauchy problem for the fractional wave equation  (\ref{eq0010})  will be same as the  sharp lifespan estimates for 
		the  semilinear wave equation  on compact Lie group $G$ proved in  \cite{27}.

		In this  work we         see that under some suitable assumptions on the initial data,     local in-time solutions to  the Cauchy problem for   fractional wave equation blow up in finite time for any $p > 1$.  In other words, we don't have any global existence result.  However,    we believe that    the presence of a positive damping term and of a positive mass term in the Cauchy problem completely reverses the scenario.   
		This is possible because  without requiring any additional lower bound for $p > 1$, 		  the global in time existence of small data solution can be proved in the evolution energy space. This will be pursued in a forthcoming paper.

		\section{Data availability statement}
		The authors confirm that the data supporting the findings of this study are available within the article  and its supplementary materials.

	\end{document}